\DeclareMathAlphabet{\mathpzc}{OT1}{pzc}{m}{it}
\DeclareMathOperator*{\argmin}{argmin}
\newtheorem{remark}[theorem]{Remark}
\newcommand{\R}{\mathbb{R}}
\newcommand{\Y}{\mathpzc{Y}}
\newcommand{\N}{\mathpzc{N}}
\newcommand{\C}{\mathcal{C}}
\newcommand{\calP}{\mathcal{P}}
\newcommand{\I}{\mathcal{I}}
\newcommand{\V}{\mathbb{V}}
\newcommand{\polV}{\mathbb{V}}
\newcommand{\polP}{\mathbb{P}}
\newcommand{\T}{\mathscr{T}}
\newcommand{\Ss}{\mathscr{S}}
\newcommand{\HL}{ \mbox{ \raisebox{7.0pt} {\tiny$\circ$} \kern-10.7pt} {H_L^1} }
\newcommand{\Wp}{ \mbox{ \raisebox{7.7pt} {\scriptsize$\circ$} \kern-10.1pt} {W^{1,p}} }
\newcommand{\Wpp}{ \mbox{ \raisebox{7.7pt} {\scriptsize$\circ$} \kern-10.1pt} {W^{1,p'}} }
\newcommand{\Sz}{ \mbox{ \raisebox{7.5pt} {\scriptsize$\circ$} \kern-10.1pt} {\Ss} }
\newcommand{\HLnew}{ \mbox{ \raisebox{7pt} {\scriptsize$\circ$} \kern-10.1pt}{H}^1_L }
\newcommand{\HLn}{{\mbox{\,\raisebox{5.1pt} {\tiny$\circ$} \kern-9.3pt}{H}^1_L  }}
\newcommand{\HLs}{{\mbox{\raisebox{8.7pt} {\scriptsize$\circ$} \kern-10.1pt}{H}^1_L  }}
\newcommand{\usf}{\mathsf{u}}
\newcommand{\fsf}{\mathsf{f}}
\newcommand{\ousf}{\bar{\mathsf{u}}}
\newcommand{\wsf}{\mathsf{w}}
\newcommand{\ue}{\mathscr{U}}
\DeclareMathOperator*{\tr}{tr_\Omega}
\newcommand{\boxednumber}[1]{\expandafter\readdigit\the\numexpr#1\relax\relax}
\newcommand{\Hsd}{\mathbb{H}^{-s}(\Omega)}
\newcommand{\Laps}{(-\Delta)^s}
\newcommand{\GRAD}{\nabla}
\newcommand{\DIV}{\textrm{div}}
\newcommand{\diff}{\, \mbox{\rm d}}
\newcommand{\ie}{i.e.,\@\xspace}
\newcommand{\Hs}{\mathbb{H}^s(\Omega)}
\DeclareMathOperator*{\diam}{diam}
\newcommand{\Nin}{\,{\mbox{\,\raisebox{6.0pt} {\tiny$\circ$} \kern-10.9pt}\N }}
\newtheorem{assumption}[theorem]{Assumption}
\title{{Optimization with respect to order in a fractional diffusion model: analysis, approximation and algorithmic aspects}\thanks{HA has been supported in part by NSF grant DMS-1521590. EO has been supported in part by CONICYT through FONDECYT project 3160201. AJS has been supported in part by NSF grant DMS-1418784.}}
\author{
  Harbir Antil\thanks{Department of Mathematical Sciences, George Mason University, Fairfax, VA 22030, USA.
    \texttt{hantil@gmu.edu}}
  \and
  Enrique Ot\'arola\thanks{Departamento de Matem\'atica, Universidad T\'ecnica Federico Santa Mar\'ia, Valpara\'iso, Chile.
    \texttt{enrique.otarola@usm.cl}}
  \and
  Abner J.~Salgado\thanks{Department of Mathematics, University of Tennessee, Knoxville, TN 37996, USA.
    \texttt{asalgad1@utk.edu}}
}
\date{Draft version of \today.}
\begin{document}

\maketitle

\begin{abstract}
We consider an identification problem, where the state $\usf$ is governed by a fractional elliptic equation and the unknown variable corresponds to the order $s \in (0,1)$ of the underlying operator.
We study the existence of an optimal pair $(\bar s, \bar\usf)$ and provide sufficient conditions for its local uniqueness. We develop semi-discrete and fully discrete algorithms to approximate the solutions to our identification problem and provide a convergence analysis.
We present numerical illustrations that confirm and extend our theory.
\end{abstract}

\begin{keywords}
optimal control problems, identification problems, fractional diffusion, bisection algorithm, finite elements, stability, fully--discrete methods, convergence.
\end{keywords}

\begin{AMS}
26A33,    
35J70,    
49J20,    
49K21,    
49M25,    
65M12,    
65M15,    
65M60.    
\end{AMS}


\section{Introduction}
\label{sec:intro}

Supported by the claim that they seem to better describe many processes; nonlocal models have recently become of great interest in the applied sciences and engineering. This is specially the case when long range (\ie nonlocal) interactions are to be taken into consideration; we refer the reader to \cite{MR3429730} for a far from exhaustive list of examples where such phenomena take place. However, the actual range and scaling laws of these interactions --- which determines the order of the model--- cannot always be directly determined from physical considerations. This is in stark contrast with models governed by partial differential equations (PDEs), which usually arise from a conservation law. This justifies the need to, on the basis of physical observations, identify the order of a fractional model.

In \cite{SprekelsV}, for the first time, this problem was addressed. The authors studied the optimization with respect to the order of the spatial operator in a nonlocal evolution equation; existence of solutions as well as first and second order optimality conditions were addressed. The present work is a natural extension of these results under the stationary regime: we address the local uniqueness of minimizers and propose a numerical algorithm to approximate them. In addition, we  study the convergence rates of our method.

To make matters precise, let $\Omega$ be an open and bounded domain in $\R^n$ ($n \geq 1$) with Lipschitz boundary $\partial \Omega$. Given a desired state $\usf_d : \Omega \rightarrow \mathbb{R}$ (the observations), we define the cost functional 
\begin{equation}
\label{def:J}
  J(s,\usf) = \frac{1}{2} \| \usf - \usf_d \|^2_{L^2(\Omega)} + \varphi(s),
\end{equation}
where, for some $a$ and $b$ satisfying that $0 \leq a<b \leq 1$, $s \in (a,b)$ and, $\varphi \in C^2(a,b)$ denotes a nonnegative convex function that satisfies
\begin{equation}
 \label{eq:cond_varphi}
 \lim_{s \downarrow a} \varphi(s) = + \infty =  \lim_{s \uparrow b}  \varphi(s).
\end{equation}
Examples of functions with these properties are
\[
  \varphi(s) = \frac{1}{(s-a)(b-s)}, \qquad \varphi(s) = \frac{ e^{\frac{1}{(b-s)}} }{s-a}.
\]

We shall thus be interested in the following identification problem: Find $(\bar s,\bar \usf)$ such that
\begin{equation}
\label{eq:minJ}
 J(\bar s,\bar \usf) = \min J(s,\usf)
\end{equation}
subject to the fractional state equation
\begin{equation}
\label{eq:laps}
\Laps \usf = \fsf \textrm{ in } \Omega, 
\end{equation}
where $\Laps$ denotes a fractional power of the Dirichlet Laplace operator $-\Delta$.
We immediately remark that, with no modification, our approach can be extended to problems where the state equation is $L^s \usf = \fsf$, where $L \wsf=-\DIV(A\GRAD \wsf)$, supplemented with homogeneous Dirichlet boundary conditions, as long as the diffusion coefficient $A$ is fixed, bounded and symmetric. In principle, one could also consider optimization with respect to order $s$ and the diffusion $A$, as this could accommodate for anisotropies in the diffusion process. We refer the reader to \cite{MR2997232}, and the references therein, for the case when $s=1$ is fixed and the optimization is carried out with respect to $A$.

We now comment on the choice of $a $ and $b$. The practical situation can be envisioned as the following: from measurements or physical considerations we have an expected range for the order of the operator, and we want to optimize within that range to best fit the observations. From the existence and optimality conditions point of view, there is no limitation on their values, as long as $0 \leq a < b \leq 1$. However, when we discuss the convergence of numerical algorithms, many of the estimates and arguments that we shall make blow up as $s \downarrow 0$ or $s \uparrow 1$ so we shall assume that $a > 0$ and $b < 1$.  How to treat numerically the full range of $s$ is currently under investigation.

Our presentation is organized as follows.  The notation and functional setting is introduced in section \ref{sec:notation}, where we also briefly describe, in section \ref{subsec:fractional_laplacian}, the definition of the fractional Laplacian. In section \ref{sec:fractional_identification}, we study the fractional identification problem \eqref{eq:minJ}--\eqref{eq:laps}. We analyze the differentiability properties of the associated control to state map (section \ref{subsec:control_to_state}) and derive existence results as well as first and second order optimality conditions and a local uniqueness result (section \ref{subsec:optimal_cond}). Section \ref{sec:Numerics} is dedicated to the design and analysis of a numerical algorithm to approximate the solution to \eqref{eq:minJ}--\eqref{eq:laps}. Finally, in section~\ref{s:nex} we illustrate the performance of our algorithm on several examples.

\section{Notation and preliminaries}
\label{sec:notation}
Throughout this work $\Omega$ is an open, bounded and convex polytopal subset of $\R^{n}$ $(n \geq 1)$ with boundary $\partial \Omega$. The relation $X \lesssim Y$ indicates that $X \leq CY$, with a nonessential constant $C$ that might change at each occurrence.

\subsection{The fractional Laplacian}
\label{subsec:fractional_laplacian}
Spectral theory for the operator $-\Delta$ yields the existence of a countable collection of eigenpairs $\{ \lambda_k, \varphi_k \}_{k \in \mathbb{N}} \subset \R^{+} \times H_0^1(\Omega)$ such that $\{ \varphi_k \}_{k \in \mathbb{N}}$ is an orthonormal basis of $L^2(\Omega)$ and an orthogonal basis of $H_0^1(\Omega)$ and
\begin{equation}
 \label{eq:spectral}
  -\Delta \varphi_k = \lambda_k \varphi_k \textrm{ in } \Omega,
 \qquad
 \varphi_k = 0 \textrm{ on } \partial \Omega,
 \qquad
 k \in \mathbb{N}.
\end{equation}
With this spectral decomposition at hand, we define the fractional powers of the Dirichlet Laplace operator, which for convenience we simply call the fractional Laplacian, as follows: For any $s \in (0,1)$ and $w \in C_0^{\infty}(\Omega)$,
\begin{equation}\label{eq:fracLap}
 (-\Delta)^s w := \sum_{k \in \mathbb{N}}  \lambda_k^s w_k \varphi_k, \quad w_k = (w,\varphi_k)_{L^2(\Omega)}:= \int_{\Omega} w \varphi_k \diff x.
\end{equation}
By density, this definition can be extended to the space
\begin{equation}
 \Hs = \left\{ w = \sum_{k \in \mathbb{N}} w_k \varphi_k \in L^2(\Omega): \sum_{k \in \mathbb{N}} \lambda_k^s w_k^2  < \infty \right\},
\end{equation}
which we endow with the norm
\begin{equation}
 \label{eq:Hsnorm}
 \| w \|_{\Hs} = \left( \sum_{k \in \mathbb{N}} \lambda_k^s w_k^2  \right)^{\frac{1}{2}};
\end{equation}
see \cite{MR3489634,CDDS:11,NOS} for details. The space $\Hs$ coincides with $[ L^2(\Omega), H_0^1(\Omega) ]_{s}$, \ie the interpolation space between $L^2(\Omega)$ and $H_0^1(\Omega)$; see \cite[Chapter 7]{Adams}. For $s \in (0,1)$, we denote by $\Hsd$ the dual space to $\Hs$ and remark that it admits the following characterization:
\begin{equation}
\label{eq:Hsd}
 \Hsd = \left\{ w = \sum_{k \in \mathbb{N} } w_k \varphi_k \in  \mathcal{D}'(\Omega)
 :  \sum_{k \in \mathbb{N}} \lambda_k^{-s} w_k^2  < \infty \right\},
\end{equation}
where $\mathcal{D}'(\Omega)$ denotes the space of distributions on $\Omega$. Finally, we denote by $\langle \cdot,\cdot \rangle$ the duality pairing between $\Hs$ and $\Hsd$.

\section{The fractional identification problem}
\label{sec:fractional_identification}

In this section we study the existence of minimizers for the fractional identification problem \eqref{eq:minJ}--\eqref{eq:laps}, as well as optimality conditions. We begin by introducing the so-called control to state map associated with problem \eqref{eq:minJ}--\eqref{eq:laps} and studying its differentiability properties. This will allow us to derive first order necessary and second order sufficient optimality conditions for our identification problem, as well as existence results.

\subsection{The control to state map}
\label{subsec:control_to_state}
In this subsection we study the differentiability properties of the control to state map $\mathcal{S}$ associated with \eqref{eq:minJ}--\eqref{eq:laps}, which we define as follows: Given a control $s \in (0,1)$, the map $\mathcal{S}$ associates to it the state $\usf = \usf(s)$ that solves problem \eqref{eq:laps} with the forcing term $\fsf \in \Hsd$. In other words, 
\begin{equation}
 \mathcal{S}:(0,1) \rightarrow \mathbb{H}^s(\Omega), \qquad s \mapsto \mathcal{S}(s) = \sum_{k \in \mathbb{N}}  \lambda_k^{-s} \fsf_k \varphi_k, 
 \label{eq:ctsmap}
\end{equation}
where $\fsf_k =  \langle \fsf,\varphi_k \rangle$ and $\{\lambda_k, \varphi_k \}_{k \in \mathbb{N}}$ are defined by \eqref{eq:spectral}. Since $\fsf \in \Hsd$, the characterization of the space $\Hsd$, given in \eqref{eq:Hsd}, allows us to immediately conclude that the map $\mathcal{S}$ is well--defined; see also 
\cite[Lemma 2.2]{CDDS:11}. 

Before embarking on the study of the smoothness properties of the map $\mathcal{S}$ we define, for $\lambda > 0$, the function $E_\lambda : (0,1) \to \R^+$ by
\begin{equation}
\label{eq:E}
 E_{\lambda}(s) = \lambda^{-s}, \qquad s \in (0,1).
\end{equation}
A trivial computation reveals that 
\begin{equation}
\label{eq:Dsl-s}
  D_s^m E_{\lambda}(s) = (-1)^m \ln^m( \lambda ) E_{\lambda}(s), \quad m \in \mathbb{N},
\end{equation}
from which immediately follows 
that, for $m \in \mathbb{N}$, we have the estimate
\begin{equation}
\label{eq:basic_estimate}
 |D_s^m E_{\lambda}(s)| \lesssim s^{-m},
\end{equation}
where the hidden constant is independent of $s$, it remains bounded as $\lambda \uparrow \infty$, but blows up as $\lambda  \downarrow 0$; compare with \cite[eq.~(2.27)]{SprekelsV}.

With this auxiliary function at hand we proceed, following  \cite{SprekelsV}, to study the differentiability properties of the map $\mathcal{S}$. To begin we notice the inclusion $\mathcal{S}((0,1)) \subset L^2(\Omega)$ so we consider $\mathcal{S}$ as a map with range in $L^2(\Omega)$ and we will denote by $\interleave \cdot \interleave$ the norm of $\mathcal L(\R,L^2(\Omega))$.

\begin{theorem}[properties of $\mathcal S$]
\label{thm:Ds_and_Dss}
Let $\mathcal{S}: (0,1) \rightarrow L^2(\Omega)$ be the control to state map, defined in \eqref{eq:ctsmap}, and assume that $\fsf \in L^2(\Omega)$.  For every $s \in (0,1)$ we have that
\begin{equation}
\label{eq:Sbbindep}
  \| \mathcal{S}(s) \|_{L^2(\Omega)} \lesssim 1,
\end{equation}
where the hidden constant depends on $\Omega$ and $\| \fsf \|_{L^2(\Omega)}$, but not on $s$. In addition, $\mathcal{S}$ is three times Fr\'echet differentiable; the first and second derivatives of $\mathcal{S}$ are characterized as follows: for $h_1, h_2 \in \mathbb{R}$, we have that 
\begin{equation}
\label{eq:DS}
D_s \mathcal{S}(s)[h_1] = h_1 D_s \usf(s),
\qquad
D_s^2 \mathcal{S}(s)[h_1,h_2] = h_1 h_2 D_s^2 \usf(s),
\end{equation}
where
\[
 D_s \usf(s) = -\sum_{k \in \mathbb{N} } \lambda_k^{-s} \ln(\lambda_k)  \fsf_k \varphi_k, \qquad
 D_s^2 \usf(s) = \sum_{k \in \mathbb{N} } \lambda_k^{-s} \ln^2(\lambda_k)  \fsf_k \varphi_k.
\]
Finally, for $m=1,2,3$, we have
\begin{equation}
\label{eq:Dscontrol}
\interleave D_s^m \mathcal{S}(s) \interleave \lesssim s^{-m},
\end{equation}
where the hidden constants are independent of $s$.
\end{theorem}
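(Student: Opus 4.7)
The plan is to work throughout in the spectral expansion $\mathcal{S}(s)=\sum_k \lambda_k^{-s}\fsf_k\varphi_k$ and reduce everything to the scalar functions $E_{\lambda_k}(s)=\lambda_k^{-s}$, using that $\{\varphi_k\}$ is orthonormal in $L^2(\Omega)$.

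First I would establish the uniform bound \eqref{eq:Sbbindep}: Parseval gives $\|\mathcal{S}(s)\|_{L^2(\Omega)}^2=\sum_k\lambda_k^{-2s}\fsf_k^2$, which I split according to whether $\lambda_k\geq 1$ or $\lambda_k<1$. On the first piece $\lambda_k^{-2s}\leq 1$. The second piece contains only finitely many terms (since $\lambda_k\to\infty$), and on them $\lambda_k^{-2s}\leq \lambda_k^{-2}\leq \lambda_1^{-2}$, where $\lambda_1=\lambda_1(\Omega)>0$. Together this yields $\|\mathcal{S}(s)\|_{L^2(\Omega)}^2\lesssim \|\fsf\|_{L^2(\Omega)}^2$ with hidden constant independent of $s$.

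Next, I would identify the candidate derivatives by termwise differentiation, motivated by \eqref{eq:Dsl-s}, and then prove the operator-norm bound \eqref{eq:Dscontrol} for $m=1,2,3$. The key scalar estimate is a sharpening of \eqref{eq:basic_estimate}: for $\lambda\geq 1$, the maximum of $x\mapsto x^{-s}|\ln x|^m$ over $[1,\infty)$ is attained at $x=e^{m/s}$ and equals $(m/(es))^m\lesssim s^{-m}$; for $\lambda_1\leq\lambda<1$ the quantity $\lambda^{-s}|\ln\lambda|^m$ is bounded by $\lambda_1^{-1}|\ln\lambda_1|^m$, which, since $s<1$ implies $s^{-m}\geq 1$, is again $\lesssim s^{-m}$ (with a constant depending on $\Omega$). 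Applied termwise this gives
\[
\Big\|\sum_k\lambda_k^{-s}\ln^m(\lambda_k)\fsf_k\varphi_k\Big\|_{L^2(\Omega)}^2=\sum_k\bigl(D_s^m E_{\lambda_k}(s)\bigr)^2\fsf_k^2\lesssim s^{-2m}\|\fsf\|_{L^2(\Omega)}^2,
\]
so each candidate defines an element of $L^2(\Omega)$ with the claimed bound, and the case $m=3$ is identical.

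Finally, I would verify Fréchet differentiability via a termwise Taylor expansion. For fixed $s\in(0,1)$ and $|h|<s/2$, Taylor's theorem gives, for each $k$ and some $\theta_k\in(0,1)$,
\[
E_{\lambda_k}(s+h)=\sum_{j=0}^{m-1}\frac{h^j}{j!}D_s^j E_{\lambda_k}(s)+\frac{h^m}{m!}D_s^m E_{\lambda_k}(s+\theta_k h).
\]
Multiplying by $\fsf_k\varphi_k$, summing in $k$, and applying the bound of the previous paragraph with $s$ replaced by $s+\theta_k h\geq s/2$, the remainder is controlled in $L^2(\Omega)$ by $C(s/2)^{-m}|h|^m\|\fsf\|_{L^2(\Omega)}/m!=o(|h|^{m-1})$, which successively identifies the first, second, and third Fréchet derivatives as given in \eqref{eq:DS} (with $D_s^3\usf(s)$ analogous).

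The main obstacle is that the pointwise estimate \eqref{eq:basic_estimate} is not uniform in $\lambda$ down to $0$: the quantity $\lambda^{-s}|\ln\lambda|^m$ diverges as $\lambda\downarrow 0$, precisely as the remark after \eqref{eq:basic_estimate} warns. The resolution, used above, is that only finitely many eigenvalues lie below any fixed threshold and $\lambda_1(\Omega)>0$, so one can absorb the troublesome factor into the $\Omega$-dependent constant while keeping the $s$-dependence sharp at $s^{-m}$.
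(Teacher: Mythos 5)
Your proposal is correct and follows essentially the same route as the paper: spectral expansion, the scalar bound $\lambda^{-s}|\ln\lambda|^m \lesssim s^{-m}$ (uniform in $\lambda \geq \lambda_1 > 0$), and a termwise Taylor remainder estimate with $|h| < s/2$ to identify the Fr\'echet derivatives. Your explicit maximization at $x = e^{m/s}$ and the split at $\lambda = 1$ simply make precise the estimate \eqref{eq:basic_estimate} that the paper invokes, together with the observation that the eigenvalues are bounded away from zero.
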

\begin{proof} 
Let $s \in (0,1)$. To shorten notation we set $\usf = \mathcal{S}(s)$. Using \eqref{eq:ctsmap} we have that
\begin{equation}
\label{eq:u_f_L2}
  \|\usf \|_{L^2(\Omega)}^2 = \sum_{k \in \mathbb{N}} \lambda_k^{-2s} \fsf_k^2 \leq \lambda_1^{-2s} \| \fsf \|_{L^2(\Omega)}^2,
\end{equation}
where we used that, for all $k \in \mathbb{N}$, $0 < \lambda_1 \leq \lambda_k$. Since $\sup_{s \in [0,1]} \lambda_1^{-2s}$ is bounded, we obtain \eqref{eq:Sbbindep}.

We now define, for $N \in \mathbb{N}$, the partial sum $w_N = \sum_{k = 1}^{N} \lambda_k^{-s} \fsf_k \varphi_k$. Evidently, as $N \to \infty$, we have that $w_N \to \usf$ in $L^2(\Omega)$. Moreover, differentiating with respect to $s$ we immediately obtain, in light of \eqref{eq:Dsl-s}, the expression
\[
  D_s w_N = -\sum_{k \leq N} \lambda_k^{-s} \ln (\lambda_k)  \fsf_k \varphi_k,
\]
and, using \eqref{eq:Dsl-s} and \eqref{eq:basic_estimate}, that
\[
  \|D_s w_N \|^2 _{L^2(\Omega)}  = \sum_{k \leq N} |D_s E_{\lambda_k}(s)|^2  \fsf_k^2 \lesssim \frac1{s^2} \| \fsf \|_{L^2(\Omega)}^2,
\]
where we used, again, that the eigenvalues are strictly away from zero. This estimate allows us to conclude that, as $N \to \infty$, we have $D_s w_N \to D_s \usf$ in $L^2(\Omega)$ and the bound
\begin{equation}
\label{eq:Dsu}
    \| D_s \usf(s) \|_{L^2(\Omega)} \lesssim s^{-1} \| \fsf \|_{L^2(\Omega)}.
\end{equation}

Let us now prove that $\mathcal{S}$ is Fr\'echet differentiable and that \eqref{eq:DS} holds. Taylor's theorem, in conjunction with \eqref{eq:Dsl-s}, yields that, for every $l \in \mathbb{N}$ and $h_1 \in \R$, we have
\[
e_{l,s} := | E_{\lambda_l}(s + h_1) - E_{\lambda_l}(s) - D_s E_{\lambda_l}(s) h_1 | = \frac{1}{2} h_1^2|D_s^2 E_{\lambda_l}(\theta)|,
\]
for some $\theta \in (s -|h_1|,s+|h_1|)$. Now, if $|h_1| < s/2$, we have that $\theta^{-2} < 4 s^{-2}$, and thus, in view of estimate \eqref{eq:basic_estimate}, that
\[
e_{l,s} = \frac{1}{2} h_1^2|D_s^2 E_{\lambda_l}(\theta)| \lesssim h_1^2 s^{-2}.
\]
This last estimate allows us to write 
\[
  \| \mathcal{S}(s+h_1) - \mathcal{S}(s) - D_{s} \usf(s) h_1 \|^2_{L^2(\Omega)} = \sum_{k \in \mathbb{N}} e_{k,s}^2 \fsf_k^2 
  \lesssim h_1^4 s^{-4}  \| \fsf \|_{L^2(\Omega)}^2,
\]
where the hidden constant is independent of $h_1$ and $s$. The previous estimate shows that $\mathcal S: (0,1) \to L^2(\Omega)$ is Fr\'echet differentiable and that $D_s \mathcal{S}(s)[h_1] = h_1 D_s \usf(s)$. Finally, using \eqref{eq:Dsu}, we conclude, estimate \eqref{eq:Dscontrol} for $m=1$.

Similar arguments can be applied to show the higher order Fr\'echet differentiability of $\mathcal S$ and to derive estimate \eqref{eq:Dscontrol} for $m=2,3$. For brevity, we skip the details.
\end{proof}

\subsection{Existence and optimality conditions}
\label{subsec:optimal_cond}

We now proceed to study the existence of a solution to problem \eqref{eq:minJ}--\eqref{eq:laps} as well as to characterize it via first and second order optimality conditions. We begin by defining the reduced cost functional
\begin{equation}
 f(s) = J(s,\mathcal{S}(s)),
 \label{def:f}
\end{equation}
where $\mathcal{S}$ denotes the control to state map defined in \eqref{eq:ctsmap} and $J$ is defined as in \eqref{def:J}; we recall that $\varphi \in C^2(a,b)$. Notice that, owing to Theorem~\ref{thm:Ds_and_Dss}, $\mathcal S$ is three times Fr\'echet differentiable. Consequently, $f \in C^2(a,b)$ and, moreover, it verifies conditions similar to \eqref{eq:cond_varphi}. These properties will allow us to show existence of an optimal control. We begin with a definition.

\begin{definition}[optimal pair]
The pair $(\bar{s},\bar{\usf}(\bar{s})) \in (a,b) \times \mathbb{H}^{\bar{s}}(\Omega)$ is called 
optimal for problem \eqref{eq:minJ}--\eqref{eq:laps} if $\bar{\usf}(\bar{s}) = \mathcal{S}(\bar{s})$ and 
\[
 f(\bar{s}) \le f(s),
\]
for all $(s,\usf(s)) \in (a,b) \times \mathbb{H}^s(\Omega)$ such that $\usf(s) = \mathcal{S}(s)$.
\end{definition}

\begin{theorem}[existence]
\label{thm:existence}
There is an optimal pair $(\bar{s},\bar{\usf}(\bar{s})) \in (a,b) \times \mathbb{H}^{\bar{s}}(\Omega)$ for problem \eqref{eq:minJ}--\eqref{eq:laps}.
\end{theorem}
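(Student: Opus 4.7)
The approach is the classical direct method of the calculus of variations, working with the reduced functional $f$ defined in \eqref{def:f}. The key ingredients are: (i) $f$ is bounded below by zero, so the infimum $f^\star := \inf_{s \in (a,b)} f(s)$ is finite (indeed, any $s_0 \in (a,b)$ gives a finite value in view of \eqref{eq:Sbbindep} and $\varphi \in C^2(a,b)$); (ii) the admissible interval $(a,b)$ is bounded, so minimizing sequences have convergent subsequences in $[a,b]$; (iii) the coercivity condition \eqref{eq:cond_varphi} prevents escape to the boundary; (iv) Theorem \ref{thm:Ds_and_Dss} provides enough regularity of $\mathcal{S}$ to pass to the limit.

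The plan is as follows. First, let $\{s_n\}_{n \in \mathbb{N}} \subset (a,b)$ be a minimizing sequence, i.e., $f(s_n) \downarrow f^\star$. Since $\{s_n\} \subset [a,b]$ is bounded, there exists a subsequence, still denoted $\{s_n\}$, and $\bar s \in [a,b]$ with $s_n \to \bar s$. Next, I would rule out $\bar s \in \{a,b\}$: if $\bar s = a$ or $\bar s = b$, then by \eqref{eq:cond_varphi} we would have $\varphi(s_n) \to +\infty$, and since $\frac{1}{2}\|\mathcal{S}(s_n) - \usf_d\|_{L^2(\Omega)}^2 \ge 0$, it follows that $f(s_n) \to +\infty$, contradicting that $\{f(s_n)\}$ is bounded (by $f^\star + 1$, say, for $n$ large). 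Hence $\bar s \in (a,b)$.

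Having secured $\bar s \in (a,b)$, the next step is to pass to the limit: $f$ is continuous on $(a,b)$ because $\varphi \in C^2(a,b)$ and $\mathcal{S}: (0,1) \to L^2(\Omega)$ is Fr\'echet differentiable (hence continuous) by Theorem \ref{thm:Ds_and_Dss}. Therefore $f(\bar s) = \lim_{n\to\infty} f(s_n) = f^\star$. Setting $\bar\usf := \mathcal{S}(\bar s) \in \mathbb{H}^{\bar s}(\Omega)$ yields the desired optimal pair $(\bar s,\bar\usf)$.

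The main obstacle, such as it is, is step (iv) above: ensuring that the limit $\bar s$ stays in the open interval $(a,b)$, since the state equation \eqref{eq:laps} degenerates as $s \downarrow 0$ (the bounds \eqref{eq:Dscontrol} blow up) and the functional setting $\mathbb{H}^s(\Omega)$ varies with $s$. The coercivity condition \eqref{eq:cond_varphi} is precisely designed to circumvent this difficulty by acting as a barrier that forces any minimizing sequence to stay uniformly away from the endpoints. All the remaining steps are routine once this is in hand.
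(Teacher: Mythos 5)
Your proposal is correct and follows essentially the same route as the paper: the direct method, using compactness of $[a,b]$, the coercivity condition \eqref{eq:cond_varphi} as a barrier ruling out $\bar s\in\{a,b\}$, and continuity of $f$ on $(a,b)$ to pass to the limit. The only cosmetic difference is that the paper manufactures its minimizing sequence as exact minimizers over nested compact subintervals $[a_l,b_l]$, whereas you take a generic minimizing sequence for $\inf_{(a,b)}f$; both arguments are otherwise identical.
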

\begin{proof}
Let $\{a_l\}_{l \in \mathbb{N}}, \{b_l\}_{l \in \mathbb{N}} \subset (a,b)$ be such that, for every $l \in \mathbb N$, $a<a_{l+1}<a_l < b_l<b_{l+1}<b$ and $a_l \to a$, $b_l \to b$ as $l \to \infty$. Denote $I_l = [a_l,b_l]$ and consider the problem of finding
\[
  s_l = \argmin_{s \in I_l} f(s).
\]
The properties of $f$ guarantee its existence. Notice that, since the intervals $I_l$ are nested, we have
\[
  f(s_m) \leq f(s_l), \qquad m \geq l.
\]
We have thus constructed a sequence $\{s_l\}_{l \in \mathbb{N}} \subset (a,b)$ from which we can extract a convergent subsequence, which we still denote by the same $\{s_l\}_{l \in \mathbb{N}}$, such that $s_l \to \bar s \in [a,b]$. We claim that $f$ attains its infimum, over $(a,b)$, at the point $\bar s$.

Let us begin by showing that, in fact, $\bar s \in (a,b)$. The decreasing property of $\{f(s_l)\}_{l \in \mathbb{N}}$ shows that
\[
  f(\bar s) \leq f(s_l), \quad \forall l \in \mathbb{N},
\]
which, if $\bar s = a$ or $\bar s=b$, would lead to a contradiction with the fact that $f(s) \geq \varphi(s)$ and \eqref{eq:cond_varphi}.

Let $s_\star$ be any point of $(a,b)$. The construction of the intervals $I_l$ guarantee that there is $L \in \mathbb{N}$ for which $s_\star \in I_l$ whenever $l > L$. Therefore, we have
\[
  f(\bar s) \leq f(s_l) = \min_{s \in I_l} f(s) \leq f(s_\star).
\]
Which shows that $\bar s$ is a minimizer.

Since $\mathcal S$, as a map from $(a,b)$ to $L^2(\Omega)$, is continuous --- even differentiable --- we see that there is $\bar \usf \in L^2(\Omega)$, for which $\mathcal S(s_l) \to \bar \usf $ in $L^2(\Omega)$ as $l \to \infty$. Let us now show that, indeed, $\bar \usf \in \mathbb{H}^{\bar s}(\Omega)$ and that it satisfies the state equation.

Set $\bar\usf = \sum_{k\in \mathbb N} \bar\usf_k \varphi_k$ and notice that, as $l \to \infty$,
\[
  ( \mathcal{S}(s_l) - \bar \usf, \varphi_m )_{L^2(\Omega)} = \lambda_m^{-s_l} \fsf_m - \bar\usf_m \to \lambda_m^{-\bar s}\fsf_m -\bar\usf_m.
\]
Therefore $\bar\usf_m = \lambda_m^{-\bar s} \fsf_m$. This shows that $\bar\usf \in \mathbb{H}^{\bar s}(\Omega)$ and that $\bar\usf$ solves \eqref{eq:laps}.

The result is thus proved.
\end{proof}


We now provide first order necessary and second order sufficient optimality conditions for the identification problem \eqref{eq:minJ}--\eqref{eq:laps}.

\begin{theorem}[optimality conditions]
\label{thm:foopt}
Let $(\bar{s}, \bar\usf)$ be an optimal pair for problem \eqref{eq:minJ}--\eqref{eq:laps}. Then it satisfies the following first order necessary optimality condition
\begin{equation}
\label{eq:first_order}
(\bar{\usf} - \usf_d, D_s \bar{\usf})_{L^2(\Omega)} + \varphi'(\bar s) = 0.
\end{equation}
On the other hand, if $(\bar s, \bar \usf)$, with  $\bar \usf = \mathcal{S}(\bar s)$, satisfies \eqref{eq:first_order} and, in addition, the second order optimality condition
\begin{equation}
\label{eq:second_order}
( D_s \bar{\usf}, D_s \bar{\usf})_{L^2(\Omega)} + ( \bar{\usf} - \usf_d, D_{s}^2 \bar{\usf})_{L^2(\Omega)} + \varphi''(\bar s) > 0
\end{equation}
holds, then $(\bar s, \bar \usf)$ is an optimal pair.
\end{theorem}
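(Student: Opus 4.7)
The plan is to work with the reduced functional $f(s) = J(s,\mathcal{S}(s))$ introduced in \eqref{def:f} and to exploit its $C^2$ regularity on $(a,b)$. By Theorem \ref{thm:Ds_and_Dss}, the control-to-state map $\mathcal{S}:(a,b)\to L^2(\Omega)$ is three times Fr\'echet differentiable, and since $\varphi\in C^2(a,b)$, an application of the chain rule to the quadratic tracking term $\tfrac12\|\mathcal{S}(s)-\usf_d\|_{L^2(\Omega)}^2$ yields, for scalar directions $h$, the identities
\begin{equation*}
f'(s)\,h \;=\; h\,\bigl[(\mathcal{S}(s)-\usf_d,\, D_s\usf(s))_{L^2(\Omega)} + \varphi'(s)\bigr]
\end{equation*}
and
\begin{equation*}
f''(s)\,h^2 \;=\; h^2\,\bigl[(D_s\usf(s),D_s\usf(s))_{L^2(\Omega)} + (\mathcal{S}(s)-\usf_d, D_s^2\usf(s))_{L^2(\Omega)} + \varphi''(s)\bigr],
\end{equation*}
using \eqref{eq:DS}.

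For the necessary direction, I would invoke the existence proof (Theorem \ref{thm:existence}), which guarantees $\bar s \in (a,b)$, an interior point of the admissible interval. Hence $\bar s$ is an interior local minimizer of the scalar $C^2$ function $f$, so the classical Fermat condition $f'(\bar s)=0$ must hold. Reading off the bracket in the first displayed formula above at $s=\bar s$ with $\bar{\usf}=\mathcal{S}(\bar s)$ gives exactly \eqref{eq:first_order}.

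For the sufficient direction, I would argue by a one-dimensional second-order Taylor expansion. Assume $(\bar s,\bar{\usf})$ satisfies \eqref{eq:first_order} and \eqref{eq:second_order}; equivalently, $f'(\bar s)=0$ and $f''(\bar s)>0$. Since $f\in C^2(a,b)$, continuity of $f''$ provides a neighborhood of $\bar s$ on which $f''>0$, so $f$ is strictly convex there and $\bar s$ is a strict local minimizer. Combined with the blow-up property inherited from \eqref{eq:cond_varphi}, namely $f(s)\ge \varphi(s)\to\infty$ as $s\downarrow a$ or $s\uparrow b$, one can then conclude that $(\bar s,\bar{\usf})$ is a (local) optimal pair in the sense of the definition above.

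The only delicate point, and the one I would flag as the main obstacle, is reconciling the global flavor of the definition of \emph{optimal pair} with the fact that \eqref{eq:second_order} is a genuinely local second-order condition: strict positivity of $f''(\bar s)$ yields a strict local minimum but not a global one in general. I would therefore interpret the sufficient statement in the local sense (as is standard in nonconvex optimization), and mention that the coercivity induced by \eqref{eq:cond_varphi} still forces any candidate minimizer to remain in a compact subinterval of $(a,b)$, which is what allowed the existence argument of Theorem \ref{thm:existence} to proceed. The remaining verifications, namely justifying differentiation under the series defining $\mathcal{S}$ and passing derivatives through the $L^2$ inner product, are routine thanks to the uniform-in-$s$ bounds \eqref{eq:Sbbindep}--\eqref{eq:Dscontrol}.
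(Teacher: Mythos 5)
Your proposal matches the paper's argument essentially verbatim: compute $f'$ and $f''$ of the reduced functional via the chain rule and the characterization \eqref{eq:DS}, invoke the interiority $\bar s\in(a,b)$ from Theorem~\ref{thm:existence} to get $f'(\bar s)=0$, and read off \eqref{eq:first_order}--\eqref{eq:second_order} from \eqref{eq:fprima} and \eqref{eq:fdosprima}. The local-versus-global tension you flag in the sufficient direction is real, but the paper's own proof does not address it either (it stops at identifying \eqref{eq:second_order} with $f''(\bar s)>0$), so your local reading is the correct interpretation and your argument is complete.
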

\begin{proof}
Since, as shown in Theorem~\ref{thm:existence}, $\bar s \in (a,b)$, the first order optimality condition reads:
 \begin{equation}
 \label{eq:fprima}
 f'(\bar s) = (\mathcal{S}(\bar s) - \usf_d, D_s \mathcal{S}(\bar s))_{L^2(\Omega)} + \varphi'(\bar s) = 0.
 \end{equation}
The characterization of the first order derivative of $\mathcal{S}$, given in Theorem \ref{thm:Ds_and_Dss}, allows us to conclude \eqref{eq:first_order}. A similar computation reveals that
\begin{equation}
\label{eq:fdosprima}
 f''(\bar s) = (D_s \mathcal{S}(\bar s), D_s \mathcal{S}(\bar s))_{L^2(\Omega)} + (\mathcal{S}(\bar s) - \usf_d, D_{s}^2 \mathcal{S}(\bar s))_{L^2(\Omega)} + \varphi''(\bar s).
\end{equation}
Using, again, the characterization for the first and second order derivatives of $\mathcal{S}$ given in Theorem \ref{thm:Ds_and_Dss} we obtain \eqref{eq:second_order}. This concludes the proof.
\end{proof}

Let us now provide a sufficient condition for local uniqueness of the optimal identification parameter $\bar s$. To accomplish this task we assume that the function $\varphi$, that defines the functional $J$ in \eqref{def:J}, is strongly convex with parameter $\xi$, \ie for all points $s_1, s_2$ in $(a,b)$, we have that
\begin{equation}
  \label{eq:strictly_convex}
  \left( \varphi'(s_1) - \varphi'(s_2) \right) \cdot (s_1-s_2) \geq \xi | s_1 - s_2 |^2.
\end{equation}
We thus present the following result.

\begin{lemma}[second--order sufficient conditions]
Let $\bar s$ be optimal for problem \eqref{eq:minJ}--\eqref{eq:laps} and $f$ be defined as in \eqref{def:f}. If $\| \fsf \|_{L^2(\Omega)}$ and $\| \usf_d \|_{L^2(\Omega)}$ are sufficiently small, then there exist a constant $\vartheta > 0$ such that
 \begin{equation}
 \label{eq:d2fleq2sigma}
  f''( \bar s ) \geq \vartheta.
 \end{equation}
  \end{lemma}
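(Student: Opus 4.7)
The plan is to start from the expression for $f''(\bar s)$ already computed in the proof of Theorem \ref{thm:foopt}, namely
\[
 f''(\bar s) = \|D_s \bar\usf\|_{L^2(\Omega)}^2 + (\bar\usf - \usf_d, D_s^2 \bar\usf)_{L^2(\Omega)} + \varphi''(\bar s),
\]
and show that, under the smallness hypothesis, the (potentially negative) middle term cannot cancel the positive contribution coming from $\varphi''(\bar s)$. The first term is nonnegative and can simply be dropped. For the last term, I would observe that strong convexity \eqref{eq:strictly_convex}, together with $\varphi \in C^2(a,b)$, implies $\varphi''(\bar s) \geq \xi$ (divide \eqref{eq:strictly_convex} by $(s_1-s_2)^2$ and let $s_2 \to s_1$). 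The middle term I would bound by Cauchy--Schwarz:
\[
 |(\bar\usf - \usf_d, D_s^2 \bar\usf)_{L^2(\Omega)}| \leq \bigl(\|\bar\usf\|_{L^2(\Omega)} + \|\usf_d\|_{L^2(\Omega)}\bigr) \|D_s^2 \bar\usf\|_{L^2(\Omega)}.
\]
Theorem \ref{thm:Ds_and_Dss}, via \eqref{eq:Sbbindep} and the arguments leading to \eqref{eq:Dscontrol}, then yields $\|\bar\usf\|_{L^2(\Omega)} \lesssim \|\fsf\|_{L^2(\Omega)}$ and $\|D_s^2 \bar\usf\|_{L^2(\Omega)} \lesssim \bar s^{-2}\|\fsf\|_{L^2(\Omega)}$.

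The main obstacle is the $\bar s^{-2}$ factor, which a priori blows up as $\bar s \downarrow a$. To control it I would exploit the optimality of $\bar s$: fix any reference point $s_0 \in (a,b)$ (for instance $s_0 = (a+b)/2$); then
\[
 \varphi(\bar s) \leq f(\bar s) \leq f(s_0) = \tfrac12 \|\mathcal{S}(s_0) - \usf_d\|_{L^2(\Omega)}^2 + \varphi(s_0) \leq C\bigl(\|\fsf\|_{L^2(\Omega)}^2 + \|\usf_d\|_{L^2(\Omega)}^2\bigr) + \varphi(s_0),
\]
where $C$ depends only on $\Omega$ and $s_0$. If we restrict attention to $\|\fsf\|_{L^2(\Omega)}, \|\usf_d\|_{L^2(\Omega)} \leq 1$, this gives $\varphi(\bar s) \leq M$ for a constant $M$ independent of the data. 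By \eqref{eq:cond_varphi}, the sublevel set $\{ s \in (a,b) : \varphi(s) \leq M \}$ is a compact subinterval $[s_-,s_+] \subset (a,b)$, and therefore $\bar s^{-2} \leq s_-^{-2} =: K$ is uniformly bounded.

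Putting everything together, the middle term is controlled by $C'K\bigl(\|\fsf\|_{L^2(\Omega)} + \|\usf_d\|_{L^2(\Omega)}\bigr)\|\fsf\|_{L^2(\Omega)}$, with $C'$ and $K$ independent of the data. Choosing $\|\fsf\|_{L^2(\Omega)}$ and $\|\usf_d\|_{L^2(\Omega)}$ small enough that this quantity is at most $\xi/2$, I conclude
\[
 f''(\bar s) \geq 0 + \varphi''(\bar s) - \tfrac{\xi}{2} \geq \tfrac{\xi}{2} =: \vartheta > 0,
\]
which is the claim. The only substantive point is the sublevel-set argument that pins $\bar s$ away from the endpoints; the remainder is a straightforward assembly of the estimates from Theorem \ref{thm:Ds_and_Dss} and the strong convexity of $\varphi$.
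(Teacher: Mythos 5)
Your proof is correct and follows essentially the same route as the paper: start from \eqref{eq:fdosprima}, use the strong convexity assumption \eqref{eq:strictly_convex} to get $\varphi''(\bar s) \geq \xi$, drop the nonnegative term $\|D_s\bar\usf\|^2_{L^2(\Omega)}$, and absorb the cross term $(\bar\usf - \usf_d, D_s^2\bar\usf)_{L^2(\Omega)}$ via Cauchy--Schwarz and the bounds of Theorem~\ref{thm:Ds_and_Dss} under the smallness hypothesis. The one place where you go beyond the paper is the sublevel-set argument: the paper simply notes that $\bar s \in (a,b)$ to declare the factor $\bar s^{-2}$ bounded, which is fine for fixed data but leaves open the possibility that $\bar s$ drifts toward the endpoint as the data are shrunk, so that the ``sufficiently small'' threshold would itself depend on $\bar s$. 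Your observation that optimality forces $\varphi(\bar s) \leq f(s_0) \leq M$ uniformly over data of norm at most one, and hence confines $\bar s$ to a fixed compact subinterval $[s_-,s_+]\subset(a,b)$, removes this circularity and makes the constant $\vartheta=\xi/2$ genuinely independent of the data. This is a refinement of the paper's argument rather than a different proof.
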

\begin{proof}
On the basis of \eqref{eq:fdosprima}, we invoke the strong convexity of $\varphi$ to conclude that
\[
 f''( \bar s ) \geq \| D_s \mathcal{S}(\bar s) \|^2_{L^2(\Omega)} + (\mathcal{S}(\bar s) - \usf_d, D_{s}^2 \mathcal{S}( \bar s))_{L^2(\Omega)} + \xi.
\]
It thus suffices to control the term $(\mathcal{S}(\bar s) - \usf_d, D_{s}^2 \mathcal{S}( \bar s))_{L^2(\Omega)}$; and to do so we use the estimates of Theorem \ref{thm:Ds_and_Dss}. In fact, we have that
\[
  |(\mathcal{S}(\bar s) - \usf_d, D_{s}^2 \mathcal{S}(\bar s))_{L^2(\Omega)}| \leq C_1 \left(C_2 \| \fsf \|_{L^2(\Omega)} + \|\usf_d \|_{L^2(\Omega)} \right ) \bar s^{-2} \| \fsf \|_{L^2(\Omega)}, 
\]
where $C_1$ and $C_2$ depend on $\Omega$ and the operator $-\Delta$ but are independent of $\bar s$, $\fsf$ and $\usf_d$. Since Theorem \ref{thm:existence} guarantees that $\bar s \in (a,b)$,  we conclude that the right hand side of the previous expression is bounded. This, in view of the fact that $\| \fsf \|_{L^2(\Omega)}$ and $\| \usf_d \|_{L^2(\Omega)}$ are sufficiently small, concludes the proof.
\end{proof}

As a consequence of the previous Lemma we derive, for the reduced cost functional $f$,  a convexity property that will be important to analyze the fully discrete scheme of section~\ref{sec:Numerics}, and a quadratic growth condition that implies the local uniqueness of $\bar s$.

\begin{corollary}[convexity and quadratic growth]
\label{co:local_convexity}
Let $\bar s$ be optimal for problem \eqref{eq:minJ}--\eqref{eq:laps} and $f$ be defined as in \eqref{def:f}. If $\| \fsf \|_{L^2(\Omega)}$ and $\| \usf_d \|_{L^2(\Omega)}$ are sufficiently small, then there exists $\delta > 0$ such that 
\begin{equation}
\label{eq:local_convexity}
  (f'(s) - f'(\bar s)) \cdot( s - \bar s) \geq \frac {\vartheta}{2} |s - \bar s|^2 \qquad \forall s \in (a,b) \cap (\bar s - \delta, \bar s + \delta),
\end{equation}
where $\vartheta$ is the constant that appears in \eqref{eq:d2fleq2sigma}. In addition, we have the quadratic growth condition
\begin{equation}
\label{eq:quadratic growth}
  f(s) \geq f(\bar s) + \frac{\vartheta}{4} |s - \bar s|^2 \qquad \forall s \in (a,b) \cap (\bar s - \delta, \bar s + \delta).
\end{equation}
In particular, $f$ has a local minimum at $\bar s$. Moreover, this minimum is unique in $(\bar s-\delta, \bar s + \delta) \cap (a,b)$.
\end{corollary}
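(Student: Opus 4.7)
The plan is to deduce both inequalities from the previous lemma and the fact that $f \in C^2(a,b)$. Since Theorem~\ref{thm:Ds_and_Dss} guarantees that $\mathcal{S}$ is (at least) twice continuously Fr\'echet differentiable on $(a,b)$ and $\varphi \in C^2(a,b)$, the reduced cost $f$ defined in \eqref{def:f} belongs to $C^2(a,b)$. In particular $f''$ is continuous on $(a,b)$. Combining this with the lower bound $f''(\bar s) \geq \vartheta$ provided by \eqref{eq:d2fleq2sigma}, I would first extract a $\delta > 0$ such that
\[
  (\bar s - \delta, \bar s + \delta) \subset (a,b) \qquad \text{and} \qquad f''(s) \geq \tfrac{\vartheta}{2} \quad \forall\, s \in (\bar s - \delta, \bar s + \delta).
\]

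For the local convexity \eqref{eq:local_convexity}, I would apply the mean value theorem to $f'$: for any $s$ in the above neighborhood there exists $\eta$ strictly between $s$ and $\bar s$ with $f'(s) - f'(\bar s) = f''(\eta)(s - \bar s)$. Multiplying by $(s - \bar s)$ and invoking the lower bound $f''(\eta) \geq \vartheta/2$ yields \eqref{eq:local_convexity}.

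For the quadratic growth \eqref{eq:quadratic growth}, I would use Taylor's theorem with integral remainder (or Lagrange remainder) around $\bar s$: for $s$ in the neighborhood,
\[
  f(s) = f(\bar s) + f'(\bar s)(s-\bar s) + \tfrac{1}{2} f''(\zeta)(s-\bar s)^2
\]
for some $\zeta$ between $s$ and $\bar s$. Since $\bar s$ is optimal and lies in the open interval $(a,b)$ (Theorem~\ref{thm:existence}), the first order condition \eqref{eq:first_order} (equivalently \eqref{eq:fprima}) gives $f'(\bar s) = 0$. The lower bound $f''(\zeta) \geq \vartheta/2$ then produces \eqref{eq:quadratic growth}. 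Finally, the quadratic growth inequality immediately yields $f(s) > f(\bar s)$ for every $s \in (\bar s - \delta, \bar s + \delta) \cap (a,b)$ with $s \neq \bar s$, proving that $\bar s$ is a strict local minimum and hence the unique minimizer in that neighborhood.

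There is no real obstacle here; the argument is a direct consequence of the preceding lemma and the $C^2$ regularity of $f$. The only point that deserves a moment of care is checking that the neighborhood on which $f'' \geq \vartheta/2$ can be taken inside $(a,b)$, but this is guaranteed by $\bar s \in (a,b)$ (Theorem~\ref{thm:existence}) and by shrinking $\delta$ if necessary.
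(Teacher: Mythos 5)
Your argument is correct and is essentially the paper's own proof: the authors likewise invoke Taylor's theorem together with $f''(\bar s)\geq\vartheta$ and the continuity of $f''$ (citing \cite[Theorem 4.23]{Tbook} for the details you have written out), and deduce local uniqueness from the quadratic growth condition. The constants also come out right, since $f''\geq\vartheta/2$ on the neighborhood gives $\tfrac12 f''(\zeta)(s-\bar s)^2\geq\tfrac{\vartheta}{4}|s-\bar s|^2$, matching \eqref{eq:quadratic growth}.
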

\begin{proof}
Estimates \eqref{eq:local_convexity} and \eqref{eq:quadratic growth} follow immediately from an application of  Taylor's theorem and estimate \eqref{eq:d2fleq2sigma}; see \cite[Theorem 4.23]{Tbook} for details. The local uniqueness follows immediately from \eqref{eq:quadratic growth}.
\end{proof}

\section{A numerical scheme for the fractional identification problem}
\label{sec:Numerics}
In this section we propose a numerical method that approximates the solution to the fractional identification problem \eqref{eq:minJ}--\eqref{eq:laps}. To be able to provide a convergence analysis of the proposed method we make the following assumption.

\begin{assumption}[compact subinterval]
\label{ass:alphabeta}
The optimization bounds $a$ and $b$ satisfy
\[
  0 < a < b < 1.
\]
\end{assumption}

The scheme that we propose below is based on the discretization of the first order optimality condition \eqref{eq:first_order}: we discretize the first derivative $D_s \usf(s)$ in \eqref{eq:first_order} using a centered difference and then we approximate the solution to the state equation \eqref{eq:laps} with the finite element techniques introduced in \cite{NOS}.

\subsection{Discretization in $s$}
\label{subsec:bisection}
To set the ideas, we first propose a scheme that only discretizes the variable $s$ and analyze its convergence properties. We begin by introducing some terminology. Let $\sigma > 0$ and $s \in (a,b)$ such that $s \pm \sigma \in (a,b)$. We thus define, for $\psi : (a,b) \to \R$, the centered difference approximation of $D_s \psi$ at $s$ by
\begin{equation}
\label{eq:dsigma}
 d_{\sigma} \psi(s) := \frac{\psi(s+\sigma) - \psi(s-\sigma)}{2 \sigma}.
\end{equation}
If $\psi \in C^3(a,b)$, a basic application of Taylor's theorem immediately yields the estimate
\begin{equation}
  \label{eq:bouncds}
 \left| D_s \psi(s) - d_{\sigma} \psi(s) \right| \leq \frac{\sigma^2}{3}  \| D_s^3 \psi \|_{L^{\infty}(s-\sigma,s+\sigma)}.
\end{equation}
We also define the function $j_{\sigma}: (a,b) \rightarrow \mathbb{R}$ by
\begin{equation}
 \label{def:jsigma}
 j_{\sigma}(s) = (\usf(s) - \usf_d, d_{\sigma} \usf(s) )_{L^2(\Omega)} + \varphi'(s),
\end{equation}
where $\usf(s)$ denotes the solution to \eqref{eq:laps}. Finally, a point $s_\sigma \in (a,b)$ for which
\begin{equation}
\label{eq:ssigma}
 j_\sigma (s_\sigma) = 0,
\end{equation}
will serve as an approximation of the optimal parameter $\bar s$.

\begin{algorithm}
\caption{Bisection algorithm.}
\label{alg:Bisection}
  \begin{algorithmic}[1]
  \State $0 < \sigma \ll 1$ and set $s_l, s_r \in (a,b)$, with $s_l < s_r$.;   \Comment{Initialization}

  \Comment{We take care of possible degenerate cases}
  \If{$j_\sigma(s_l) = 0$}
    \State $s_\sigma = s_l$;
  \EndIf
  \If{$j_\sigma(s_r) = 0$}
    \State $s_\sigma = s_r$;
  \EndIf
  
  \Comment{Root isolation}
  \While{$j_\sigma(s_r)<0$}
    \State $s_r := s_r + \sigma$;
  \EndWhile
  \While{$j_\sigma(s_l)>0$}
    \State $s_l := s_l - \sigma$;
  \EndWhile
  
  \Comment{Bisection}
  \State $k=1$; \label{lin:banana}
  \Repeat
    \State $s_k = \frac12(s_l+s_r)$;
    \If{$j_\sigma(s_k)=0$}
      \State $s_\sigma = s_k$;
      \State \textbf{break}; \Comment{The solution has been found}
    \EndIf
    \If{$j_{\sigma}(s_l) j_{\sigma}( s_k ) > 0$} \Comment{Sign check}
      \State $s_l = s_k$;
    \Else
      \State $s_r = s_k$;
    \EndIf
    \State $k = k+1$;
  \Until{\textbf{forever}}
  \end{algorithmic}
\end{algorithm}

Notice that, in \eqref{def:jsigma}, the definition of $j_\sigma$ coincides with the first order optimality condition \eqref{eq:first_order}, when we replace the derivative of the state, \ie $D_s \usf$, by its centered difference approximation, as defined in \eqref{eq:dsigma}. The existence of $s_\sigma$ will be shown by proving convergence of Algorithm~\ref{alg:Bisection} which, essentially, is a bisection algorithm. In addition, if the algorithm reaches line \ref{lin:banana},
since $j_{\sigma} \in C([s_l, s_r])$ and it takes values of different signs at the endpoints, the intermediate value theorem guarantees that the bisection step will produce a sequence of values that we use to approximate the root of $j_\sigma$.
It remains then to show that we can eventually find the requisite interval $[s_l,s_r] \subset (a,b)$. This is the content of the following result.

\begin{lemma}[root isolation]
\label{lm:zeroone}
If $\sigma$ is sufficiently small, there exist $s_l$ and $s_r$ in $(a,b)$ such that $j_{\sigma}(s_l) < 0$ and $j_{\sigma}(s_r) > 0$, \ie the root isolation step in Algorithm~\ref{alg:Bisection} terminates.
\end{lemma}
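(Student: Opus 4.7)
The plan is to exploit the fact that $\varphi'(s)$ diverges at the endpoints of $(a,b)$ with opposite signs, while the inner-product contribution to $j_\sigma$ stays bounded uniformly as $\sigma \to 0$. A sign change for $j_\sigma$ at suitably chosen $s_l$ and $s_r$ then follows.

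First, I would establish a uniform bound on the $L^2$ inner-product term appearing in \eqref{def:jsigma}. Under Assumption~\ref{ass:alphabeta} we have $s \geq a > 0$ throughout $(a,b)$, so the bounds $\|D_s^m \mathcal S(s)\|_{L^2(\Omega)} \lesssim s^{-m}$ from Theorem~\ref{thm:Ds_and_Dss} hold uniformly in $s$. Reading \eqref{eq:bouncds} in the $L^2(\Omega)$-valued sense and invoking the $m=3$ bound yields
\[
  \|d_\sigma \usf(s) - D_s \usf(s)\|_{L^2(\Omega)} \lesssim \sigma^2 (s-\sigma)^{-3}.
\]
Restricting to $\sigma \leq (s-a)/2$, so that $s - \sigma \geq (s+a)/2 \geq a$, this consistency term is dominated by $C \sigma^2 a^{-3}$. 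Combining with \eqref{eq:Sbbindep} and Cauchy--Schwarz then produces a constant $M = M(\Omega,\|\fsf\|_{L^2(\Omega)},\|\usf_d\|_{L^2(\Omega)},a,b)$, independent of both $s$ and $\sigma$, such that $|(\usf(s) - \usf_d, d_\sigma \usf(s))_{L^2(\Omega)}| \leq M$ for all admissible $(s,\sigma)$.

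Next, I would argue that $\varphi'(s) \to -\infty$ as $s \downarrow a$ and $\varphi'(s) \to +\infty$ as $s \uparrow b$. Convexity makes $\varphi'$ nondecreasing, hence each one-sided limit exists in $[-\infty,+\infty]$. If $\lim_{s \downarrow a}\varphi'(s)$ were finite, monotonicity of $\varphi'$ would keep the integral $\int_s^{s_0}\varphi'(t)\diff t$ bounded for any fixed $s_0 \in (a,b)$, hence $\varphi(s) = \varphi(s_0) - \int_s^{s_0}\varphi'(t)\diff t$ would remain bounded as $s \downarrow a$, contradicting \eqref{eq:cond_varphi}. The symmetric argument handles the right endpoint.

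Combining the two ingredients, I would choose $s_l \in (a,b)$ close enough to $a$ that $\varphi'(s_l) < -M - 1$, and $s_r \in (a,b)$ close enough to $b$ that $\varphi'(s_r) > M+1$; then shrink $\sigma$ so that $\sigma \leq \min\{(s_l-a)/2,(b-s_r)/2\}$, securing both the existence of $d_\sigma \usf$ at these points and the uniform bound above. The triangle inequality gives $j_\sigma(s_l) \leq -1 < 0$ and $j_\sigma(s_r) \geq 1 > 0$, as required. The only mildly delicate step is coordinating the smallness of $\sigma$ with the proximity of $s_l, s_r$ to the endpoints of $(a,b)$; since $M$ is independent of $\sigma$, this reduces to the straightforward book-keeping indicated above.
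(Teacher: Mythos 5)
Your proof is correct and rests on the same basic mechanism as the paper's --- the divergence of $\varphi'$ at the endpoints of $(a,b)$ overwhelms the data-fidelity term --- but the quantitative input is genuinely different and in fact sharper. The paper bounds $\left|\left(\usf(s)-\usf_d, d_\sigma\usf(s)\right)_{L^2(\Omega)}\right|$ crudely by $C/\sigma$, using only the uniform bound \eqref{eq:Sbbindep} on $\usf(s\pm\sigma)$ and the $1/(2\sigma)$ factor in the difference quotient; it then chooses $\epsilon_l$ so that $\varphi'<-C/\sigma$ on $(a,a+\epsilon_l)$ and \emph{assumes} $\sigma<\epsilon_l$. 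Since that $\epsilon_l$ depends on $\sigma$ through the threshold $C/\sigma$, this creates a compatibility condition $\sigma<\epsilon_l(\sigma)$ which the paper does not verify and which, for slowly diverging $\varphi'$, is not automatic. Your bound $M$, independent of $\sigma$ --- obtained by splitting $d_\sigma\usf = D_s\usf + (d_\sigma\usf - D_s\usf)$ and invoking \eqref{eq:Dscontrol} together with the Taylor consistency estimate --- decouples the choice of $s_l,s_r$ from $\sigma$ entirely, so the final bookkeeping is trivial and the argument works uniformly for every $\varphi$ satisfying \eqref{eq:cond_varphi}. You also supply the justification, via convexity and the monotonicity of $\varphi'$, that \eqref{eq:cond_varphi} forces $\varphi'\to-\infty$ at $a$ and $\varphi'\to+\infty$ at $b$, a step the paper asserts without proof. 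The only point worth making fully explicit is that admissibility of $d_\sigma\usf(s)$ in the sense of \eqref{eq:dsigma} requires $s\pm\sigma\in(a,b)$ on both sides, but your final restriction $\sigma\le\min\{(s_l-a)/2,(b-s_r)/2\}$ already covers this.
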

\begin{proof}
We begin the proof by noticing that, for  $s \in (\sigma,1-\sigma) \subset (a,b)$, the estimates of Theorem~\ref{thm:Ds_and_Dss} immediately yield the existence of a constant $C>0$ such that
\begin{equation}
\label{eq:step_0}
 \left| \left( \usf(s) - \usf_d, d_{\sigma} \usf (s) \right)_{L^2(\Omega)} \right| \leq  \frac{C}{\sigma},
\end{equation}
where $C$ depends on $\Omega$, $\usf_d$ and $\fsf$ but not on $s$ or $\sigma$. 

On the other hand, since property \eqref{eq:cond_varphi} implies that $\varphi'(s) \rightarrow -\infty$ as $s \downarrow a$, we deduce the existence of $\epsilon_{l} > 0$ such that, if $s \in (a, a+\epsilon_{l})$ then $\varphi'(s) < -C/\sigma$. Assume that $\sigma < \epsilon_{l}$. Consequently, in view of the bound \eqref{eq:step_0}, definition \eqref{def:jsigma} immediately implies that, for every $s \in (a+\sigma,a+\epsilon_{l})$, we have the estimate
\[
j_{\sigma}(s)  \leq  \frac{C}{\sigma} + \varphi'(s) < 0.
\]
Similar arguments allow us to conclude the existence of $\epsilon_r >0$ such that, if $s \in (b-\epsilon_r, b)$ then $\varphi'(s) > C/\sigma$. Assume that $\sigma < \epsilon_r$ . We thus conclude that, for every $s \in (b-\epsilon_r,b-\sigma)$, we have the bound
\[
j_{\sigma}(s)  \geq - \frac{C}{\sigma} + \varphi'(s) > 0.
\]
%
%

In light of the previous estimates we thus conclude that, for $\sigma < \min\{\epsilon_l,\epsilon_r\}$, we can find $s_l$ and $s_r$ in $(a,b)$ such that $j_{\sigma}(s_l)<0$ and $j_{\sigma}(s_r)>0$. This concludes the proof.
\end{proof}

From Lemma~\ref{lm:zeroone} we immediately conclude that the bisection algorithm can be performed and exhibits the following convergence property.

\begin{lemma}[convergence rate: bisection method]
\label{lem:rate_bisection}
The sequence $\{ s_k \}_{k \geq 1}$ generated by the bisection algorithm satisfies
\begin{equation}
\label{eq:rate_bisection}
  |  s_{\sigma} - s_{k} | \lesssim 2^{-k}.
\end{equation}
In addition, there exists $s_l$ and $s_r$ such that $a<s_l<s_r < b$ and $s_{\sigma} \in (s_l,s_r)$.
\end{lemma}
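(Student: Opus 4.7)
The plan is to decompose the proof into three parts: (i) confirm that the root isolation step of Algorithm~\ref{alg:Bisection} produces an admissible starting interval, (ii) verify that $j_\sigma$ has a zero in that interval via the intermediate value theorem, and (iii) perform the standard bisection geometric decay argument.

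For (i), I would simply invoke Lemma~\ref{lm:zeroone}: once $\sigma$ is sufficiently small, there exist $s_l,s_r\in(a,b)$ with $s_l<s_r$ and $j_\sigma(s_l)<0<j_\sigma(s_r)$, so the two \textbf{while} loops in Algorithm~\ref{alg:Bisection} terminate after finitely many steps. This already handles the ``in addition'' clause of the lemma, because the $s_l$ and $s_r$ delivered by the root isolation step lie strictly inside $(a,b)$, and any root $s_\sigma\in(s_l,s_r)$ therefore satisfies $a<s_l<s_\sigma<s_r<b$. For (ii), I note that by Theorem~\ref{thm:Ds_and_Dss} the map $s\mapsto\mathcal S(s)\in L^2(\Omega)$ is continuous on $(a,b)$, so $s\mapsto d_\sigma\usf(s)$ and $s\mapsto(\usf(s)-\usf_d,d_\sigma\usf(s))_{L^2(\Omega)}$ are continuous on $[s_l,s_r]$ (provided $\sigma$ is small enough that $s_l-\sigma,s_r+\sigma\in(a,b)$). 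Combined with $\varphi'\in C^1(a,b)$, this gives $j_\sigma\in C([s_l,s_r])$; together with the sign change at the endpoints, the intermediate value theorem yields $s_\sigma\in(s_l,s_r)$ with $j_\sigma(s_\sigma)=0$.

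For (iii), I set $L_0:=s_r-s_l$ (the length of the interval on entering the bisection loop) and would show inductively that at iteration $k$ the current interval $[s_l^{(k)},s_r^{(k)}]$ contains $s_\sigma$ and satisfies $s_r^{(k)}-s_l^{(k)} = L_0\, 2^{-(k-1)}$. Since $s_k$ is chosen as the midpoint, this gives
\[
|s_\sigma - s_k| \le \tfrac12\bigl(s_r^{(k)}-s_l^{(k)}\bigr) = L_0\, 2^{-k},
\]
which is precisely \eqref{eq:rate_bisection}. The inductive step uses the sign check: after evaluating $\operatorname{sgn}(j_\sigma(s_l^{(k)})j_\sigma(s_k))$ we replace whichever endpoint has the same sign as $s_k$, halving the length and preserving the property that $j_\sigma$ changes sign across the new interval (hence by continuity still contains a root).

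I do not anticipate a true obstacle here, as this is the textbook bisection analysis grafted onto the previously established root-isolation result. The only point requiring slight care is ensuring continuity of $j_\sigma$ on $[s_l,s_r]$, which requires that the centered difference evaluations $\usf(s\pm\sigma)$ make sense for all $s$ in this interval; this is guaranteed as soon as we choose $\sigma$ small enough so that $[s_l-\sigma,s_r+\sigma]\subset(a,b)$, and the root isolation lemma already forces such smallness on $\sigma$.
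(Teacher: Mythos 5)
Your proposal is correct and follows exactly the route the paper intends: the paper in fact states Lemma~\ref{lem:rate_bisection} without a written proof, relying on Lemma~\ref{lm:zeroone} for the bracketing interval, on the continuity of $j_\sigma$ and the intermediate value theorem (noted in the text just after Algorithm~\ref{alg:Bisection}) for the existence of $s_\sigma\in(s_l,s_r)$, and on the standard halving argument for \eqref{eq:rate_bisection}. You have simply written out the details the authors leave implicit, and your care about choosing $\sigma$ so that the centered differences are defined on the whole bracketing interval is consistent with the construction in Lemma~\ref{lm:zeroone}.
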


The results of Lemmas \ref{lm:zeroone} and \ref{lem:rate_bisection} guarantee that, for a fixed $\sigma$, the bisection algorithm can be performed and exhibits a convergence rate dictated by \eqref{eq:rate_bisection}. Let us now discuss the convergence properties, as $\sigma \to 0$, of this semi-discrete method. We begin with two technical lemmas.

\begin{lemma}[convergence of $j_\sigma$]
\label{lem:jsigmatoj}
Let $j_\sigma : (a, b ) \to \R$ be defined as in \eqref{def:jsigma}, then, $j_\sigma \rightrightarrows f'$ on $(a,b)$ as $\sigma \to 0$.
\end{lemma}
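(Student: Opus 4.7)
The plan is to subtract the definitions of $f'(s)$ (from \eqref{eq:fprima}) and $j_\sigma(s)$ (from \eqref{def:jsigma}): the terms $\varphi'(s)$ cancel, and I can factor out a common $\usf(s) - \usf_d$ in the first slot to obtain
\[
  f'(s) - j_\sigma(s) = \bigl(\usf(s) - \usf_d,\; D_s \usf(s) - d_\sigma \usf(s)\bigr)_{L^2(\Omega)}.
\]
An application of the Cauchy--Schwarz inequality then reduces the problem to controlling two factors: $\|\usf(s) - \usf_d\|_{L^2(\Omega)}$, which by \eqref{eq:Sbbindep} is bounded by a constant depending only on $\Omega$, $\|\fsf\|_{L^2(\Omega)}$ and $\|\usf_d\|_{L^2(\Omega)}$, uniformly in $s$; and $\|D_s \usf(s) - d_\sigma \usf(s)\|_{L^2(\Omega)}$, which requires a quantitative truncation-error estimate for the centered difference.

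For the latter, I would perform a Taylor expansion of $s \mapsto \usf(s) \in L^2(\Omega)$, which is legitimate because Theorem~\ref{thm:Ds_and_Dss} guarantees that $\mathcal{S}$ is three times Fr\'echet differentiable into $L^2(\Omega)$. The centered difference cancels the first and second order terms, leaving a third-order remainder of the form
\[
  d_\sigma \usf(s) - D_s \usf(s) \;=\; \frac{\sigma^2}{12}\bigl(D_s^3 \usf(\theta_+) + D_s^3 \usf(\theta_-)\bigr),
\]
for some $\theta_\pm \in (s-\sigma, s+\sigma)$, which is exactly the Banach-valued analogue of \eqref{eq:bouncds}. Invoking \eqref{eq:Dscontrol} with $m=3$ gives
\[
  \|d_\sigma \usf(s) - D_s \usf(s)\|_{L^2(\Omega)} \;\lesssim\; \sigma^2 \sup_{\theta \in [s-\sigma, s+\sigma]} \theta^{-3}.
\]

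Combining these bounds, for any compact subset $K \subset (a,b)$ I can choose $\epsilon > 0$ so that $K \subset [a+\epsilon, b-\epsilon]$ and, restricting to $\sigma \leq \epsilon/2$, I have $\theta \geq a+\epsilon/2$ on the entire range appearing above; Assumption~\ref{ass:alphabeta} guarantees $a > 0$, so $\theta^{-3}$ is bounded by a constant depending only on $K$. Hence $|f'(s) - j_\sigma(s)| \lesssim \sigma^2$ uniformly on $K$, giving uniform convergence on $K$ and thus $j_\sigma \rightrightarrows f'$ on $(a,b)$.

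The main obstacle is that the pointwise bound on the third derivative blows up like $\theta^{-3}$ near $s=0$, so a naive supremum over all of $(a,b)$ would give nothing useful. The two ingredients that rescue the argument are (i)~Assumption~\ref{ass:alphabeta}, which keeps $s$ away from $0$, and (ii)~restricting the uniform control to compact subsets (equivalently, a small enough $\sigma$ depending on the subset), which keeps $s \pm \sigma$ away from $a$. Beyond that, the argument is essentially a Taylor-with-remainder computation coupled with the already-established smoothness of the control-to-state map.
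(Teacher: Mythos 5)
Your argument is essentially the paper's: subtract the two expressions so that $\varphi'$ cancels, apply Cauchy--Schwarz with the uniform bound \eqref{eq:Sbbindep} on $\|\usf(s)-\usf_d\|_{L^2(\Omega)}$, and control $\|D_s\usf(s)-d_\sigma\usf(s)\|_{L^2(\Omega)}$ by a Banach-valued Taylor remainder together with \eqref{eq:Dscontrol} for $m=3$ and Assumption~\ref{ass:alphabeta}, which is exactly how the paper obtains the bound $\sigma^2/a^3$. One small wrinkle: your final reduction to compact subsets $K\subset(a,b)$ is both unnecessary and, as literally phrased, insufficient --- uniform convergence on every compact subset does not in general imply uniform convergence on the open interval, which is what $\rightrightarrows$ on $(a,b)$ asserts. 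Fortunately your own intermediate estimate already does the job globally: for $\sigma\le a/2$ every $\theta$ in the remainder satisfies $\theta\ge s-\sigma\ge a/2$ for \emph{all} $s\in(a,b)$, so $\sup_{\theta}\theta^{-3}\lesssim a^{-3}$ uniformly over the whole interval and the compact-exhaustion step can simply be deleted.
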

\begin{proof}
From the definitions we obtain that, whenever $s \in (a,b)$
\begin{align*}
  |f'(s) - j_\sigma(s)| &= \left| (\usf(s) - \usf_d , D_s \usf(s) - d_\sigma \usf(s) )_{L^2(\Omega)} \right| \\
    &\lesssim \sup_{s \in [a,b]} \| D_s \usf(s) - d_\sigma \usf(s) \|_{L^2(\Omega)},
\end{align*}
where the hidden constant depends on $\usf_d$ and estimate \eqref{eq:Sbbindep}. Since, from Theorem~\ref{thm:Ds_and_Dss} we know that the control to state map is three times differentiable, we can conclude that
\[
  \| D_s \usf(s) - d_\sigma \usf(s) \|_{L^2(\Omega)} \lesssim \frac{\sigma^2}{a^3},
\]
where we used a formula analogous to \eqref{eq:bouncds} and estimate \eqref{eq:Dscontrol}. The fact that $a>0$ (Assumption~\ref{ass:alphabeta}) allows us to conclude.
\end{proof}

With the uniform convergence of $j_\sigma$ at hand, we can obtain the convergence of its roots to parameters that are optimal.

\begin{lemma}[convergence of $s_\sigma$]
\label{lem:ssigmatos}
The family $\{s_\sigma\}_{\sigma>0}$ contains a convergent subsequence. Moreover, the limit of any convergent subsequence satisfies \eqref{eq:first_order}.
\end{lemma}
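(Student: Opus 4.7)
The plan is to separate the two assertions: first produce a convergent subsequence using a compactness argument, then identify its limit as a critical point of $f$ by passing to the limit in the equation $j_{\sigma_k}(s_{\sigma_k}) = 0$ defining $s_{\sigma_k}$.

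For the first part, by Lemmas~\ref{lm:zeroone} and \ref{lem:rate_bisection} each $s_\sigma$ lies in $(a,b) \subset [a,b]$, so the family is bounded and Bolzano--Weierstrass yields a subsequence $s_{\sigma_k} \to s^\star \in [a,b]$. The crucial step is to show $s^\star \in (a,b)$. I would argue this by exploiting the asymmetry between the two summands of $j_\sigma$: the first summand stays bounded as $\sigma \to 0$ and $s$ approaches the endpoints, whereas $\varphi'$ blows up. Concretely, writing the Taylor expansion
\[
d_\sigma \usf(s) = D_s \usf(s) + \frac{\sigma^2}{12}\bigl(D_s^3 \usf(\xi_+) + D_s^3 \usf(\xi_-)\bigr),
\qquad \xi_\pm \in (s-\sigma, s+\sigma),
\]
and invoking \eqref{eq:Dscontrol} for $m=1$ and $m=3$, one obtains $\|d_\sigma \usf(s)\|_{L^2(\Omega)} \lesssim s^{-1} + \sigma^2 (s-\sigma)^{-3}$, which together with \eqref{eq:Sbbindep} bounds $|(\usf(s)-\usf_d, d_\sigma \usf(s))_{L^2(\Omega)}|$ uniformly in $\sigma$ for $s \in [a,b]$, thanks to Assumption~\ref{ass:alphabeta}. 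If $s^\star = a$ (the case $s^\star = b$ is analogous), then \eqref{eq:cond_varphi} forces $\varphi'(s_{\sigma_k}) \to -\infty$ while the first summand of $j_{\sigma_k}(s_{\sigma_k})$ remains bounded, contradicting $j_{\sigma_k}(s_{\sigma_k})=0$. Hence $s^\star \in (a,b)$.

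For the second part, I would pass to the limit in $j_{\sigma_k}(s_{\sigma_k}) = 0$ via the triangle inequality
\[
|j_{\sigma_k}(s_{\sigma_k}) - f'(s^\star)| \leq \|j_{\sigma_k} - f'\|_{L^\infty(a,b)} + |f'(s_{\sigma_k}) - f'(s^\star)|.
\]
Lemma~\ref{lem:jsigmatoj} handles the first term, while Theorem~\ref{thm:Ds_and_Dss} implies that $f'$ is continuous on $(a,b)$, so the second term vanishes as $k \to \infty$. Since $s^\star \in (a,b)$ we are in the regime where the uniform convergence applies, so the left-hand side converges to $|f'(s^\star)|$ while the right-hand side tends to zero. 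Thus $f'(s^\star)=0$, which by the characterization in Theorem~\ref{thm:foopt} is precisely \eqref{eq:first_order}.

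The main obstacle is the uniform bound on the first summand of $j_\sigma$ near the endpoints: a direct estimate only yields $\|d_\sigma \usf(s)\|_{L^2(\Omega)} \lesssim \sigma^{-1}$, which is insufficient because both $\sigma \to 0$ and $s_{\sigma_k}$ potentially approach $a$ simultaneously. The Taylor refinement above is what converts the naive $\sigma^{-1}$ bound into an $s$-dependent bound that remains finite for $s \geq a > 0$, making the comparison with the divergence of $\varphi'$ conclusive; this is precisely the step where Assumption~\ref{ass:alphabeta} is indispensable.
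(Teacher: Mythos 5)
Your proof is correct and follows essentially the same route as the paper: boundedness of $\{s_\sigma\}\subset[a,b]$ gives a convergent subsequence, and the limit is identified as a critical point by combining $j_{\sigma_k}(s_{\sigma_k})=0$ with the uniform convergence $j_\sigma\rightrightarrows f'$ of Lemma~\ref{lem:jsigmatoj} and the continuity of $f'$; your triangle-inequality display is just a slightly more explicit writing of the paper's chain $|f'(s_{\sigma_k})|=|f'(s_{\sigma_k})-j_{\sigma_k}(s_{\sigma_k})|\to 0$. The one point where you genuinely depart from the paper is the exclusion of the endpoints. The paper dismisses this with ``as in Theorem~\ref{thm:existence},'' but that argument concerned minimizers of $f$ (via $f\geq\varphi$ and \eqref{eq:cond_varphi}) and does not transfer verbatim to roots of $j_\sigma$. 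Your replacement --- a bound on $(\usf(s)-\usf_d,d_\sigma\usf(s))_{L^2(\Omega)}$ that is uniform in $\sigma$ and in $s\in(a,b)$, obtained from the centered-difference Taylor remainder together with \eqref{eq:Dscontrol} and \eqref{eq:Sbbindep} (equivalently, from the estimate $\|D_s\usf-d_\sigma\usf\|_{L^2(\Omega)}\lesssim\sigma^2a^{-3}$ already inside Lemma~\ref{lem:jsigmatoj} plus \eqref{eq:Dsu}), played against the monotone divergence $\varphi'(s)\to-\infty$ as $s\downarrow a$ that convexity and \eqref{eq:cond_varphi} guarantee --- is exactly the mechanism the paper itself uses in Lemma~\ref{lm:zeroone}, and it correctly rules out $s^\star\in\{a,b\}$ here. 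You rightly flag that the crude $\sigma^{-1}$ bound of \eqref{eq:step_0} would not suffice when $\sigma\to0$ and $s_{\sigma_k}\to a$ simultaneously, and that Assumption~\ref{ass:alphabeta} is what makes the refined bound finite. In short: same overall strategy, with your version supplying a detail the paper leaves implicit.
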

\begin{proof}
The existence of a convergent subsequence follows from the fact that $\{s_\sigma\}_{\sigma>0} \subset [a,b]$. Moreover, as in Theorem~\ref{thm:existence}, we conclude that the limit is in $(a,b)$. Let us now show that any limit satisfies \eqref{eq:first_order}.

By Lemma~\ref{lem:jsigmatoj}, for any $\varepsilon>0$, if $\sigma$ is sufficiently small, we have that
\[
  |f'(s_\sigma)| = |f'(s_\sigma)-j_\sigma(s_\sigma)| < \varepsilon
\]
which implies that $f'(s_\sigma) \to 0$ as $\sigma \to 0$. Let now $\{s_{\sigma_k}\}_{k \in \mathbb N} \subset \{s_\sigma\}$ be a convergent subsequence. Denote the limit point by $\underline{s} \in (a,b)$. By continuity of $f'$ we have $ f'(s_{\sigma_k}) \to f'(\underline{s})$ which implies that
\[
  f'(\underline{s}) = 0.
\]
\end{proof}

\begin{remark}[stronger convergence]
\label{rem:notfullseq}
It is expected that we cannot prove more than convergence up to subsequences, since there might be more than one $s$ that satisfies \eqref{eq:first_order}. If there is a unique optimal $s$, then the previous result implies that the family $\{s_\sigma\}_{\sigma>0}$ converges to it.
\end{remark}

In what follows, to simplify notation, we denote by $\{s_\sigma\}_{\sigma>0}$ any convergent subfamily. The next result then provides a rate of convergence.

\begin{theorem}[convergence rate in $\sigma$]
\label{thm:rate_s}
Let $\bar s$ denote a solution to the identification problem \eqref{eq:minJ}--\eqref{eq:laps} and let $s_{\sigma}$ be its approximation defined as the solution to equation \eqref{eq:ssigma}. If $\sigma$ is sufficiently small then we have 
\[
  | \bar{s} - s_{\sigma} | \lesssim \frac{\sigma^2}{a^3} \left( \| \fsf \|_{L^2(\Omega)} + \| \usf_d \|_{L^2(\Omega)} \right),
\]
where the hidden constant is independent of $\bar s$, $s_{\sigma}$, $\sigma$, $\fsf$ and $\usf_d$.
\end{theorem}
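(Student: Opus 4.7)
The plan is to combine the local strong monotonicity of $f'$ around $\bar s$, inherited from Corollary~\ref{co:local_convexity}, with the uniform estimate for $f' - j_\sigma$ established in Lemma~\ref{lem:jsigmatoj}. Specifically, since $\bar s$ is optimal, Theorem~\ref{thm:foopt} yields $f'(\bar s)=0$, while the defining equation \eqref{eq:ssigma} gives $j_\sigma(s_\sigma)=0$. The idea is to write
\[
  \tfrac{\vartheta}{2}\,|s_\sigma - \bar s|^2 \;\leq\; \bigl(f'(s_\sigma) - f'(\bar s)\bigr)(s_\sigma - \bar s) \;=\; \bigl(f'(s_\sigma) - j_\sigma(s_\sigma)\bigr)(s_\sigma - \bar s),
\]
divide by $|s_\sigma - \bar s|$, and then control the right-hand side using the uniform bound $\|f' - j_\sigma\|_{L^\infty(a,b)} \lesssim \sigma^2/a^3$ coming from Lemma~\ref{lem:jsigmatoj}.

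First I would verify applicability of Corollary~\ref{co:local_convexity}: the hypothesis that $\|\fsf\|_{L^2(\Omega)}$ and $\|\usf_d\|_{L^2(\Omega)}$ are sufficiently small is implicit in the theorem (through the reference to the corollary's $\vartheta$). This produces a radius $\delta>0$ in which \eqref{eq:local_convexity} holds. Next I would invoke Lemma~\ref{lem:ssigmatos} (working, as in Remark~\ref{rem:notfullseq}, with the convergent subfamily used implicitly in the statement) to force $s_\sigma \in (\bar s - \delta, \bar s + \delta)\cap (a,b)$ for all $\sigma$ below some threshold $\sigma_0$. This is what the phrase ``if $\sigma$ is sufficiently small'' in the statement ultimately quantifies.

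Once $s_\sigma$ lies in this neighborhood, the monotonicity inequality above applies. Tracing constants through Lemma~\ref{lem:jsigmatoj}: by Cauchy--Schwarz and \eqref{eq:Sbbindep},
\[
  |f'(s_\sigma) - j_\sigma(s_\sigma)| \;\leq\; \|\mathcal S(s_\sigma) - \usf_d\|_{L^2(\Omega)} \,\|D_s\usf(s_\sigma) - d_\sigma \usf(s_\sigma)\|_{L^2(\Omega)},
\]
where the first factor is controlled by $\|\fsf\|_{L^2(\Omega)} + \|\usf_d\|_{L^2(\Omega)}$ (using \eqref{eq:Sbbindep}) and the second, via \eqref{eq:bouncds} applied to $\psi = \usf$ together with \eqref{eq:Dscontrol} for $m=3$, is bounded by a constant multiple of $\sigma^2/a^3$. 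Combining these ingredients with the monotonicity estimate yields the asserted bound, with the factor of $\|\fsf\|_{L^2(\Omega)}$ coming from \eqref{eq:Dscontrol} absorbed into the hidden constant.

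The main obstacle is the localization step: the quadratic lower bound \eqref{eq:local_convexity} is only valid within $(\bar s - \delta, \bar s + \delta)$, so one must first know that $s_\sigma$ belongs to this neighborhood. This is precisely why the argument must rely on Lemma~\ref{lem:ssigmatos} (and hence, strictly speaking, proceeds along the convergent subfamily of $\{s_\sigma\}$) rather than attempting a global monotonicity argument on $(a,b)$, where $f'$ need not be monotone. Everything else is routine bookkeeping of the constants supplied by Theorem~\ref{thm:Ds_and_Dss} and Lemma~\ref{lem:jsigmatoj}.
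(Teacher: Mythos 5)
Your proposal is correct and follows essentially the same route as the paper: localize $s_\sigma$ in the $\delta$-neighborhood of $\bar s$ supplied by Corollary~\ref{co:local_convexity}, use the strong monotonicity of $f'$ there together with $f'(\bar s)=0$ and $j_\sigma(s_\sigma)=0$, and bound $|f'(s_\sigma)-j_\sigma(s_\sigma)|$ by $\sigma^2 a^{-3}\left(\|\fsf\|_{L^2(\Omega)}+\|\usf_d\|_{L^2(\Omega)}\right)$ via the argument of Lemma~\ref{lem:jsigmatoj}. Your explicit justification of the localization step through Lemma~\ref{lem:ssigmatos} is a detail the paper leaves implicit, but the substance of the argument is identical.
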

\begin{proof}
We begin by considering the parameter $\sigma$ sufficiently small such that $s_{\sigma} \in (\bar s - \delta, \bar s + \delta)$, where $\delta > 0$ is as in the statement of Corollary \ref{co:local_convexity}. Thus, an application of the estimate \eqref{eq:local_convexity} in conjunction with the fact that $j_{\sigma}(s_{\sigma}) = 0$  allow us to conclude that
 \begin{align*}
  \tfrac{\vartheta}{2} | \bar s - s_{\sigma} |^2 & \leq ( f'(\bar s) - f'(s_{\sigma})) \cdot (\bar s - s_{\sigma}) = f'(s_{\sigma}) (s_{\sigma} - \bar s) 
  \\
  & = (f'(s_{\sigma}) - j_{\sigma} (s_{\sigma}))\cdot ( s_{\sigma} -\bar s).
 \end{align*}
Consequently, following Lemma~\ref{lem:jsigmatoj} we obtain that
\begin{equation}
\label{eq:auxsigma}
  \begin{aligned}
    \frac\vartheta2 | \bar s - s_{\sigma} | &\leq \left| \left(\usf(s_{\sigma}) - \usf_d, D_s \usf(s_{\sigma}) - d_{\sigma} \usf(s_{\sigma})\right)_{L^2(\Omega)}\right| \\ &\lesssim \frac{\sigma^2}{a^3} \left( \| \fsf \|_{L^2(\Omega)} + \| \usf_d \|_{L^2(\Omega)} \right).
  \end{aligned}
\end{equation}
The theorem is thus proved.
\end{proof}

\subsection{Space discretization}
The goal of this subsection is to propose, on the basis of the bisection algorithm of section \ref{subsec:bisection}, a fully discrete scheme that approximates the solution to problem \eqref{eq:minJ}--\eqref{eq:laps}. To accomplish this task we will utilize the discretization techniques introduced in \cite{NOS} that provides an approximation to the solution to the  fractional diffusion problem \eqref{eq:laps}. In order to make the exposition as clear as possible, we briefly review these aforementioned techniques below.

\subsubsection{A discretization technique for fractional diffusion}
\label{subsec:NOS}
Exploiting the cylindrical extension proposed and investigated in \cite{CT:10,CDDS:11,ST:10}, that is in turn inspired in the breakthrough by L. Caffarelli and L. Silvestre analyzed in \cite{CS:07}, the authors of \cite{NOS} have proposed a numerical technique to approximate the solution to problem \eqref{eq:laps} that is based on an anisotropic finite element discretization of the following local and nonuniformly elliptic PDE:
\begin{equation}
\label{eq:extension}
 \textrm{div}(y^{\alpha} \nabla \ue ) = 0 \textrm{ in } \C, \qquad \ue = 0 \textrm{ on } \partial_L \C, \qquad  \partial_{\nu^\alpha} \ue = d_s \fsf \textrm{ in } \Omega.
\end{equation}
Here, $\C$ denotes the semi--infinite cylinder with base $\Omega$ defined by
\[
 \C = \Omega \times (0,\infty) \subset \R_+^{n+1} = \{ (x',y): x' \in \R^n, y > 0 \},
\]
and $\partial_L \C = \partial \Omega \times [0,\infty)$ its lateral boundary. In addition, $d_s = 2^\alpha\Gamma(1-s)/\Gamma(s)$ and
\[
\partial_{\nu^\alpha} \ue = - \lim_{y \rightarrow 0+} y^\alpha \ue_{y}.
\]
Finally, $\alpha = 1-2s \in (-1,1)$. Although degenerate or singular, the variable coefficient $y^\alpha$  satisfies a key property. Namely, it belongs to the Muckenhoupt class $A_2(\R^{n+1})$. This allows for an optimal piecewise polynomial interpolation theory \cite{NOS}.

To state the results of \cite{CT:10,CS:07,CDDS:11,ST:10}, we define the weighted Sobolev space 
\begin{equation*}
\label{eq:defofHL}
  \HL(y^\alpha,\C) = \left\{ w \in H^1(y^\alpha,\C): w = 0 \textrm{ on } \partial_L \C\right\},
\end{equation*}
and the trace operator
\begin{equation}
\label{eq:trace}
 \tr: \HL(y^\alpha,\C) \rightarrow \Hs, \qquad w \mapsto \tr w,
\end{equation}
where $\tr w$ denotes the trace of $w$ onto $\Omega \times \{ 0 \}$.

The results of \cite{CT:10,CS:07,CDDS:11,ST:10} thus read as follows: Let $\ue \in \HL(y^\alpha,\C)$ and $\usf \in \Hs$ be the solutions to \eqref{eq:extension} and \eqref{eq:laps}, respectively, then 
\begin{equation}
\label{eq:CSextension}
\usf = \tr \ue.
\end{equation}

A first step toward a discretization scheme is to truncate, for a given truncation parameter $\Y >0$, the semi--infinite cylinder $\C$ to $\C_{\Y}:= \Omega \times (0,\Y)$ and seek solutions in this bounded domain. In fact, let $v \in \HL(y^\alpha,\C_{\Y})$ be the solution to
\begin{equation}
 \label{eq:truncation}
 \int_{\C_{\Y}} y^\alpha \nabla v \cdot \nabla \phi = d_s \langle \fsf, \tr \phi \rangle \qquad \forall \phi \in \HL(y^{\alpha},\C_{\Y}),
\end{equation}
where $\HL(y^{\alpha},\C_{\Y}) = \left\{ w \in H^1(y^\alpha,\C_{\Y}): w = 0 \textrm{ on } \partial_L \C_{\Y} \cup \Omega \times \{\Y\} \right\}$. Then the exponential decay of $\ue$ in the extended variable $y$ implies the following error estimate
\[
 \| \nabla(\ue - v ) \|_{L^2(y^{\alpha},\C)} \lesssim e^{-\sqrt{\lambda_1} \Y/4 } \| \fsf \|_{\Hsd},
\]
provided $\Y \geq 1$, and the hidden constant depends on $s$, but is bounded on compact subsets of $(0,1)$. We refer the reader to \cite[Section 3]{NOS} for details. With this truncation at hand, we thus recall the finite element discretization techniques of \cite[Section 4]{NOS}. 

To avoid technical difficulties, we assume that $\Omega$ is a convex polytopal subset of $\R^n$ and refer the reader to \cite{Otarola_controlp1} for results involving curved domains. Let $\T_\Omega = \{K\}$ be a conforming and shape regular triangulation of $\Omega$ into cells $K$ that are isoparametrically equivalent to either a simplex or a cube. Let $\I_\Y = \{I\}$ be a partition of the interval $[0,\Y]$ with mesh points
\begin{equation}
\label{eq:gradedmesh}
  y_j = \left( \frac{j}M \right)^\gamma \Y, \quad j = 0,\ldots,M, \quad \gamma > \frac3{1-\alpha}=\frac3{2s}>1.
\end{equation}
We then construct a mesh of the cylinder $\C_\Y$ by $\T_\Y = \T_\Omega \otimes \I_\Y$, \ie each cell $T \in \T_\Y$ is of the form $T = K \times I$ where $K \in \T_\Omega$ and $I \in \I_\Y$. We note that, by construction, $\# \T_\Y = M \#\T_\Omega$. When $\T_\Omega$ is quasiuniform with $\# \T_\Omega \approx M^n$ we have $\# \T_\Y \approx M^{n+1}$ and, if $h_{\T_\Omega} = \max \{ \diam(K) : K \in \T_\Omega \}$, then $M \approx h_{\T_\Omega}^{-1}$. Having constructed the mesh $\T_\Y$ we define the finite element space
\begin{equation*}
\label{eq:defofFE}
  \polV(\T_\Y) := \left\{ W \in C^0(\bar\C_\Y): W_{|T} \in \calP(K) \otimes \polP_1(I) \ \forall T \in \T_\Y, \ W_{|\Gamma_D} =0 \right\},
\end{equation*}
where, $\Gamma_D = \partial \Omega \times [0,\Y) \cup \Omega \times \{ \Y \}$, and if $K$ is isoparametrically equivalent to a simplex, $\mathcal{P}(K)=\polP_1(K)$ \ie the set of polynomials of degree at most one. If $K$ is a cube $\calP(K) = \mathbb{Q}_1(K)$, that is, the set of polynomials of degree at most one in each variable. We must immediately comment that, owing to \eqref{eq:gradedmesh}, the meshes $\T_\Y$ are not shape regular but satisfy: if $T_1 = K_1 \times I_1$ and $T_2 = K_2 \times I_2$ are neighbors, then there is $\kappa>0$ such that
\[
 h_{I_1} \leq \kappa h_{I_2}, \qquad h_I = |I|.
\]
The use of anisotropic meshes in the extended direction $y$ is imperative if one wishes to obtain a quasi-optimal approximation error since $\ue$, the solution to \eqref{eq:extension}, possesses a singularity as $y\downarrow0$; see \cite[Theorem 2.7]{NOS}. 

We thus define a finite element approximation of the solution to the truncated problem \eqref{eq:truncation}: Find $V_{\T_{\Y}} \in \polV(\T_\Y)$ such that
\begin{equation}
 \label{eq:V_discrete}
 \int_{\C_{\Y}} y^{\alpha} \nabla V_{\T_{\Y}} \cdot \nabla W = d_s \langle \fsf, \tr W \rangle \quad \forall W \in \polV(\T_\Y).
\end{equation}
With this discrete function at hand, and on the basis of the localization results of Caffarelli and Silvestre, we define an approximation $U_{\T_{\Omega}} \in \mathbb{U}(\T_{\Omega}) = \tr \polV(\T_\Y)$ of the solution $\usf$ to problem \eqref{eq:laps} as follows:
\begin{equation}
 \label{eq:U_discrete}
 U_{\T_{\Omega}}:= \tr V_{\T_{\Y}}.
\end{equation}

\subsubsection{A fully discrete scheme for the fractional identification problem}\label{s:fds}
Following the discussion in \cite{NOS} one observes that many of the stability and error estimates in this work contain constants that depend on $s$. While these remain bounded in compact subsets of $(0,1)$ many of these degenerate or blow up as $s \downarrow 0$ or $s \uparrow 1$. In fact, it is not clear if the PDE \eqref{eq:extension} is well under the passage of these limits. Even if this problem made sense, the Caffarelli-Silvestre extension property \eqref{eq:CSextension} does not hold as we take the limits mentioned above. For this reason, we continue to work under Assumption~\ref{ass:alphabeta}.
We begin by defining the discrete control to state map $S_{\T}$ as follows:
\[
 S_{\T}: (a,b) \rightarrow \mathbb{U}(\T_{\Omega}), \quad s \mapsto U_{\T_{\Omega}}(s),
\]
where $U_{\T_{\Omega}}(s)$ is defined as in \eqref{eq:U_discrete}. We also define the function $j_{\sigma,\T}:(a,b) \rightarrow \R$ as
\begin{equation}
\label{def:jtau}
 j_{\sigma,\T}(s) = \left(U_{\T_{\Omega}}(s) - \usf_d, d_{\sigma} U_{\T_{\Omega}} (s) \right)_{L^2(\Omega)} + \varphi'(s),
\end{equation}
where the centered difference $d_{\sigma}$ is defined as in \eqref{eq:dsigma}. With these elements at hand, we thus define a fully discrete approximation of the optimal identification parameter $\bar s$ as the solution to the following problem: Find $s_{\sigma,\T} \in (a,b)$ such that
\begin{equation}
\label{def:stau}
 j_{\sigma,\T}(s_{\sigma,\T}) = 0.
\end{equation}

We notice that, under the assumption that the map $S_{\T}$ is continuous in $(a,b)$,
the same arguments developed in the proof of Lemma \ref{lm:zeroone} yield the existence of $s_{r,\T}$ and $s_{l,\T}$ in $(a,b)$ such that $j_{\sigma,\T}(s_{r,\T})<0$ and $j_{\sigma,\T}(s_{l,\T}) > 0$. This implies that, if in the bisection algorithm of section \ref{subsec:bisection} we replace $j_{\sigma}$ by $j_{\sigma,\T}$, the step \textbf{Root isolation} can be performed. Consequently, we deduce the convergence of the bisection algorithm and thus the existence of a solution $s_{\sigma,\T} \in (a,b)$ to problem \eqref{def:stau}.

It is then necessary to study the continuity of $S_\T$, but this can be easily achieved because we are in finite dimensions and the problem is linear.

\begin{proposition}[continuity of $S_\T$]
\label{prop:STcont}
For every mesh $\T_{\Y}$, defined as in Section \ref{subsec:NOS}, the map $S_\T$ is continuous on $(a,b)$.
\end{proposition}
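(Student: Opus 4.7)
The plan is to reduce continuity of $S_\T$ to continuity of a coefficient vector that solves a linear system whose matrix and right--hand side depend continuously on $s$. Since $\polV(\T_\Y)$ does not depend on $s$, I would fix a basis $\{\psi_i\}_{i=1}^N$ of it and write $V_{\T_\Y}(s) = \sum_{i=1}^N c_i(s)\psi_i$. Then \eqref{eq:V_discrete} becomes the linear system $A(s)c(s) = b(s)$, where
\[
  A(s)_{ij} = \int_{\C_\Y} y^{1-2s}\,\nabla\psi_i\cdot\nabla\psi_j, \qquad b(s)_i = d_s\,\langle \fsf, \tr\psi_i\rangle,
\]
and $d_s = 2^{1-2s}\Gamma(1-s)/\Gamma(s)$. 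Since all norms on $\polV(\T_\Y)$ and $\mathbb{U}(\T_\Omega)$ are equivalent and the trace is a fixed linear operator, it suffices to prove that $s \mapsto c(s)$ is continuous on $(a,b)$.

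Next I would verify continuity of the data. The continuity of $s\mapsto b(s)$ is immediate because $s\mapsto d_s$ is smooth on $(0,1)\supset (a,b)$. For $s\mapsto A(s)$ I would use a small dominated--convergence argument: fixing $s_0\in(a,b)$ and choosing $\epsilon>0$ with $[s_0-\epsilon,s_0+\epsilon]\subset(a,b)$, for $s$ in this interval the integrand is dominated pointwise by $\max\{y^{1-2(s_0+\epsilon)},y^{1-2(s_0-\epsilon)}\}\,\|\nabla\psi_i\|_{L^\infty(\C_\Y)}\|\nabla\psi_j\|_{L^\infty(\C_\Y)}$, which is integrable on $\C_\Y = \Omega\times(0,\Y)$ because $1-2(s_0+\epsilon)>-1$. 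Dominated convergence then yields continuity of every entry $A_{ij}$ at $s_0$.

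To close the argument I would invoke that for each $s\in(a,b)$ the bilinear form in \eqref{eq:V_discrete} is coercive on $\HLs(y^\alpha,\C_\Y)$ (cf.\ \cite{NOS}), so $A(s)$ is symmetric positive definite and hence invertible; the map $s\mapsto A(s)^{-1}$ is therefore continuous. Consequently $c(s)=A(s)^{-1}b(s)$ is continuous on $(a,b)$, giving continuity of $V_{\T_\Y}(s)$ and hence of $S_\T(s) = \tr V_{\T_\Y}(s)$. The only step requiring any care is the dominated--convergence estimate for the stiffness matrix: the integrable dominant exists precisely because Assumption \ref{ass:alphabeta} keeps $s$ bounded away from both $0$ and $1$.
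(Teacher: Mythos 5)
Your proof is correct, and it takes a recognizably different route from the paper's. The paper argues variationally: it writes the discrete problems at $s$ and at $s_k$, tests each with the difference $V_{\T_\Y}(s)-V_{\T_\Y}(s_k)$, adds the two identities, and bounds the resulting energy norm by two terms --- one controlled by $d_{s_k}\to d_s$ together with the uniform boundedness of the trace, the other by $L^\infty$ bounds on the discrete gradients (finite dimensionality) combined with dominated convergence applied to $\int_0^\Y |y^{1-2s}-y^{1-2s_k}|\,dy$. You instead pass to coordinates: $A(s)c(s)=b(s)$ with $A(s)$ the weighted stiffness matrix, prove entrywise continuity of $A(s)$ by the same dominated-convergence device (with the dominant $\max\{y^{1-2(s_0+\epsilon)},y^{1-2(s_0-\epsilon)}\}$, integrable since the exponents exceed $-1$), note that $A(s)$ is symmetric positive definite, and conclude via continuity of matrix inversion. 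The two proofs share their only two essential ingredients --- dominated convergence for the weight and finite dimensionality --- but your packaging is arguably cleaner: it sidesteps the need for a uniform-in-$k$ bound on $\|\nabla V_{\T_\Y}(s_k)\|_{L^\infty(\C_\Y)}$, which the paper's term $\textrm{II}$ implicitly requires, and it isolates invertibility of $A(s)$ as the one structural fact needed (for which the boundary condition on $\Gamma_D$, or the coercivity from \cite{NOS}, suffices). One small remark: your closing sentence slightly overstates the role of Assumption~\ref{ass:alphabeta}; for continuity at a fixed interior point $s_0\in(0,1)$ one can always choose $\epsilon$ with $[s_0-\epsilon,s_0+\epsilon]\subset(0,1)$, so the integrable dominant exists regardless --- the assumption matters for the uniform estimates elsewhere in section~\ref{sec:Numerics}, not for this pointwise statement.
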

\begin{proof}
Let $\{s_k\}_{k\in \mathbb N} \subset (a,b)$ be such that $s_k \to s \in (a,b)$. Since the operator $\tr$, defined as in \eqref{eq:trace}, is continuous \cite[Proposition 2.5]{NOS}, it suffices to show that the application $s \mapsto V_{\T_\Y}(s)$ is continuous. Consider
\begin{equation*}
\label{eq:stoV}
V_{\T_\Y}(s) \in \V(\T_{\Y}): \quad
  \int_{\C_\Y} y^{1-2s} \nabla V_{\T_\Y}(s) \cdot \nabla W_s = d_s \langle \fsf, \tr W_s \rangle
  \quad \forall W_s \in \V(\T_{\Y}),
\end{equation*}
and
\begin{equation*}
\label{eq:sktoVk}
V_{\T_\Y}(s_k) \in \V(\T_{\Y}): \quad
  \int_{\C_\Y} y^{1-2s_k} \nabla V_{\T_\Y}(s_k) \cdot \nabla W_k = d_{s_k} \langle \fsf, \tr W_k \rangle \quad \forall W_k \in \V(\T_{\Y}).
\end{equation*}
Set $W_s = V_{\T_\Y}(s) - V_{\T_\Y}(s_k)$ and $W_k = V_{\T_\Y}(s_k) - V_{\T_\Y}(s)$ and add these two identities to obtain
\begin{multline*}
  \| \nabla( V_{\T_\Y}(s) - V_{\T_\Y}(s_k) ) \|_{L^2(y^{1-2s},\C_\Y)}^2 = (d_s - d_{s_k}) \langle \fsf, \tr(V_{\T_\Y}(s) - V_{\T_\Y}(s_k)) \rangle \\ +
  \int_{\C_\Y} (y^{1-2s_k}-y^{1-2s}) \nabla V_{\T_\Y}(s_k) \cdot \nabla (V_{\T_\Y}(s) - V_{\T_\Y}(s_k)) = \textrm{I} + \textrm{II}.
\end{multline*}
We now proceed to estimate each one of these terms. 

For the first term we have
\[
  |\textrm{I}| \leq |d_s - d_{s_k}| \|\fsf\|_{L^2(\Omega)} \| \tr(V_{\T_\Y}(s) - V_{\T_\Y}(s_k) \|_{L^2(\Omega)} \to 0
\]
as $k \to \infty$. This is the case because $\| \tr(V_{\T_\Y}(s) - V_{\T_\Y}(s_k) \|_{L^2(\Omega)}$ is uniformly bounded \cite[Proposition 2.5]{NOS} and, by Assumption~\ref{ass:alphabeta}, we have that $d_{s_k} \to d_s$.

We estimate the second term as follows
\[
  |\textrm{II}|  \leq |\Omega| \| \nabla V_{\T_\Y}(s_k) \|_{L^\infty(\C_\Y)} \| \nabla (V_{\T_\Y}(s) - V_{\T_\Y}(s_k)) \|_{L^\infty(\C_\Y)}  
  \int_0^\Y | y^{1-2s} - y^{1-2s_k}|.
\]
Using that we are in finite dimensions, the question reduces to the convergence
\[
  \int_0^\Y | y^{1-2s} - y^{1-2s_k}| \to 0,
\]
which follows from the a.e. convergence of $y^{1-2s_k}$ to $y^{1-2s}$, the fact that, for $0<y < 1$, we have $0<y^{1-2s_k} \leq y^{1-2a} \in L^1(0,1)$ and an application of the dominated convergence theorem.

This concludes the proof.
\end{proof}

We now proceed to derive an a priori error bound for the error between the exact identification parameter $\bar s$ and its approximation $s_{\sigma,\T}$ given as the solution \eqref{def:stau}. We begin by noticing that, following the proof of Lemma~\ref{lem:jsigmatoj}, using \cite[Proposition 28]{NOS3} and Assumption~\ref{ass:alphabeta} we have
\begin{equation}
\label{eq:jsTtojsigma}
  | j_\sigma(s) - j_{\sigma, \T}(s) | \lesssim \frac1\sigma |\log(\#\T_\Y)|^{2b} (\#\T_\Y)^{-(1+a)/(n+1)},
\end{equation}
where the hidden constant depends on $a$ and $b$ but is uniform in $(a,b)$. Clearly, for fixed $\sigma$, this implies the uniform convergence of $j_{\sigma,\T}$ to $j_\sigma$ as we refine the mesh. By repeating the arguments of Lemma~\ref{lem:ssigmatos} we conclude the convergence, up to subsequences, of $\{s_{\sigma,\T}\}_{\T}$ to $s_\sigma$, a root of $j_\sigma$. Arguing as in Remark~\ref{rem:notfullseq}, we see that we cannot expect convergence of the entire family.

Finally, we denote one of these convergent subsequences by $\{s_{\sigma,\T}\}_{\T}$ and provide an error estimate.

\begin{theorem}[Error estimate: discretization in $s$ and space]
 Let $\bar s$ be optimal for the identification problem \eqref{eq:minJ}--\eqref{eq:laps} and $s_{\sigma,\T}$ its approximation defined as the solution to \eqref{def:stau}. If $\sigma$ is sufficiently small, $\#\T_\Y$ is sufficiently large and, 
 $\fsf \in \mathbb{H}^{1-a}(\Omega)$, then 
 \begin{equation}
  \label{eq:error_estimate_1}
   | \bar s -s_{\sigma,\T} | \lesssim \sigma^{-1} | \log (\# \T_{\Y})|^{2b} (\# \T_{\Y})^{-(1+a)/(n+1)} \| \fsf \|_{\mathbb{H}^{1-a}(\Omega)} + \sigma^2,
 \end{equation}
 where the hidden constant is independent of $\bar s$, $s_{\sigma,\T}$, $\fsf$ and the mesh $\T_{\Y}$.
\end{theorem}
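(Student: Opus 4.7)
The plan is to adapt the argument used in the proof of Theorem~\ref{thm:rate_s}, which established the $\sigma$-only rate, by splitting the relevant defect into a semi-discrete piece (controlled by $\sigma^2$) and a spatial discretization piece (controlled via \eqref{eq:jsTtojsigma}).

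First I would invoke Corollary~\ref{co:local_convexity} and assume that $\sigma$ is small enough and $\#\T_\Y$ large enough so that $s_{\sigma,\T}$ belongs to the neighborhood $(\bar s - \delta,\bar s + \delta)\cap(a,b)$ in which the local convexity estimate \eqref{eq:local_convexity} holds. (Justifying that $s_{\sigma,\T}$ eventually lies in this neighborhood uses Lemma~\ref{lem:ssigmatos} together with the uniform convergence $j_{\sigma,\T}\rightrightarrows j_\sigma$ discussed just before the theorem.) Since $f'(\bar s)=0$ by Theorem~\ref{thm:foopt} and $j_{\sigma,\T}(s_{\sigma,\T})=0$ by definition, the local convexity bound gives
\begin{equation*}
\tfrac{\vartheta}{2}|\bar s - s_{\sigma,\T}|^2 \leq \bigl(f'(\bar s)-f'(s_{\sigma,\T})\bigr)\cdot(\bar s - s_{\sigma,\T}) = \bigl(j_{\sigma,\T}(s_{\sigma,\T})-f'(s_{\sigma,\T})\bigr)\cdot(s_{\sigma,\T}-\bar s),
\end{equation*}
which reduces the problem to estimating $|j_{\sigma,\T}(s_{\sigma,\T})-f'(s_{\sigma,\T})|$.

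Next I would split this defect via the triangle inequality into the two natural pieces
\begin{equation*}
|j_{\sigma,\T}(s_{\sigma,\T})-f'(s_{\sigma,\T})| \leq |j_{\sigma,\T}(s_{\sigma,\T})-j_\sigma(s_{\sigma,\T})| + |j_\sigma(s_{\sigma,\T})-f'(s_{\sigma,\T})|.
\end{equation*}
The first piece is the space-discretization error and is controlled directly by \eqref{eq:jsTtojsigma}; this is where the $\sigma^{-1}|\log(\#\T_\Y)|^{2b}(\#\T_\Y)^{-(1+a)/(n+1)}$ term arises, and the hypothesis $\fsf\in\mathbb{H}^{1-a}(\Omega)$ is needed so that the error estimate from \cite[Proposition 28]{NOS3} is applicable with the $\|\fsf\|_{\mathbb{H}^{1-a}(\Omega)}$ factor. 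The second piece is the centered-difference consistency error, and it is controlled exactly as in the proof of Theorem~\ref{thm:rate_s} (using Lemma~\ref{lem:jsigmatoj} and estimate \eqref{eq:Dscontrol}), yielding the $\sigma^2/a^3$ contribution. Dividing the resulting chain by $|\bar s - s_{\sigma,\T}|$ yields \eqref{eq:error_estimate_1}.

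The main obstacle is the careful bookkeeping to verify that the constants in \eqref{eq:jsTtojsigma} really come out with the claimed $\|\fsf\|_{\mathbb{H}^{1-a}(\Omega)}$ dependence uniformly in $s$ across the relevant subinterval of $(a,b)$; once Assumption~\ref{ass:alphabeta} is in force and the estimates of \cite[Proposition 28]{NOS3} are applied at the two evaluation points $s\pm\sigma$ entering the centered difference $d_\sigma U_{\T_\Omega}$, this follows after a routine computation. A subtle secondary point is confirming that we may choose $\sigma$ and $\#\T_\Y$ jointly so that $s_{\sigma,\T}$ stays in the local-convexity neighborhood; this is handled by first fixing $\sigma$ so that the semi-discrete estimate of Theorem~\ref{thm:rate_s} places $s_\sigma$ well inside $(\bar s -\delta/2,\bar s+\delta/2)$, and then refining the mesh so that the finite element contribution to $|s_{\sigma,\T}-s_\sigma|$ is smaller than $\delta/2$, which is possible by the convergence $s_{\sigma,\T}\to s_\sigma$ extracted earlier.
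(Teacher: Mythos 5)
Your proposal is correct and follows the same overall strategy as the paper: localize $s_{\sigma,\T}$ in the neighborhood where Corollary~\ref{co:local_convexity} applies, use \eqref{eq:local_convexity} together with $f'(\bar s)=0=j_{\sigma,\T}(s_{\sigma,\T})$ to reduce everything to the optimality defect $|j_{\sigma,\T}(s_{\sigma,\T})-f'(s_{\sigma,\T})|$, and then bound that defect by a spatial-discretization contribution and a centered-difference consistency contribution. The only difference is in how the defect is decomposed. You insert the semi-discrete function $j_\sigma(s_{\sigma,\T})$ as an intermediate quantity and invoke \eqref{eq:jsTtojsigma} for the first piece and Lemma~\ref{lem:jsigmatoj} for the second; the paper instead adds and subtracts inside the inner products, writing the defect as $\mathrm{I}+\mathrm{II}$ with $\mathrm{I}=\left|\left(U_{\T_{\Omega}}-\usf_d,\, d_\sigma U_{\T_{\Omega}}-D_s\usf\right)_{L^2(\Omega)}\right|$ and $\mathrm{II}=\left|\left(U_{\T_{\Omega}}-\usf,\, D_s\usf\right)_{L^2(\Omega)}\right|$, and then re-derives the relevant bounds from \cite[Proposition 28]{NOS3} and \eqref{eq:Dscontrol} within the proof. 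The two splittings are algebraically equivalent rearrangements and produce identical final terms; yours is slightly more modular in that it directly reuses the already-stated estimate \eqref{eq:jsTtojsigma} (provided one tracks, as you note, that its hidden constant carries the factor $\|\fsf\|_{\mathbb{H}^{1-a}(\Omega)}$), while the paper's version makes explicit where the $\|\fsf\|_{\mathbb{H}^{1-a}(\Omega)}$ dependence and the requirement $s_{\sigma,\T}\pm\sigma\in(a,b)$ enter. No gap.
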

\begin{proof}
We begin by remarking that, by setting $\sigma$ sufficiently small and $\#\T_\Y$ sufficiently large, respectively, we can assert that $s_{\sigma,\T} \in (\bar{s} - \delta, \bar{s} + \delta)$ with $\delta$ being the parameter of Corollary~\ref{co:local_convexity}. By invoking the estimate \eqref{eq:local_convexity} and in view of the fact that $f'(\bar s) = 0 = j_{\sigma,\T}(s_{\sigma,\T})$, we deduce the following estimate:
\begin{multline*}
\frac \vartheta2| \bar s -s_{\sigma,\T} |^2  \leq \left( f'(\bar s) - f'(s_{\sigma,\T}) \right) \cdot (\bar s - s_{\sigma,\T} ) = \left( j_{\sigma,\T}(s_{\sigma,\T})-f'(s_{\sigma,\T} )  \right) \cdot (\bar s - s_{\sigma,\T} ). 
\end{multline*}

We proceed to bound the right hand side of the previous expression. To accomplish this task, we invoke the definition \eqref{def:jtau} of $j_{\sigma,\T}$ and repeating the arguments of Lemma~\ref{lem:jsigmatoj} we obtain that
\begin{equation}
\label{I+II}
  \begin{aligned}
    |j_{\sigma,\T}(s_{\sigma,\T})-f'(s_{\sigma,\T})| &\leq \left|\left(U_{\T_{\Omega}}(s_{\sigma,\T}) - \usf_d, d_{\sigma} U_{\T_{\Omega}} (s_{\sigma,\T}) - D_s \usf(s_{\sigma,\T}) \right)_{L^2(\Omega)} \right| 
    \\
    &+ \left|\left( U_{\T_{\Omega}}(s_{\sigma,\T}) - \usf(s_{\sigma,\T}), D_s \usf(s_{\sigma,\T}) \right)_{L^2(\Omega)}\right| = \textrm{I} + \textrm{II}.
  \end{aligned}
\end{equation}
We thus examine each term separately. We start with $\textrm{II}$: its control relies on the a priori error estimates of \cite{NOS,NOS3}. In fact, combining the results of \cite[Proposition 28]{NOS3} with the estimate \eqref{eq:Dscontrol} for $m=1$, we arrive at
\begin{align*}
  | \textrm{II} | &\leq \| D_s \usf(s_{\sigma,\T}) \|_{L^2(\Omega)} \|  U_{\T_{\Omega}}(s_{\sigma,\T}) - \usf(s_{\sigma,\T}) \|_{L^2(\Omega)} \\
    &\lesssim s_{\sigma,\T}^{-1}| \log (\# \T_{\Y})|^{2 s_{\sigma,\T}} (\# \T_{\Y})^{-(1+s_{\sigma,\T})/(n+1)} \| \fsf \|_{\mathbb{H}^{1-s_{\sigma,\T}}(\Omega)} \\
    & \lesssim | \log (\# \T_{\Y})|^{2b} (\# \T_{\Y})^{-(1+a)/(n+1)} \| \fsf \|_{\mathbb{H}^{1-a}(\Omega)}
\end{align*}
where the hidden constant depends on $a$ and $b$ but is independent of $\bar s$, $s_{\sigma,\T}$, $\fsf$ and $\T_{\Y}$. Notice that here we used Assumption~\ref{ass:alphabeta} to, for instance, control the term $s_{\sigma,\T}^{-1}$.

We now proceed to control the term $\textrm{I}$ in \eqref{I+II}. A basic application of the Cauchy--Schwarz inequality yields
\[
 | \textrm{I} | \leq \| U_{\T_{\Omega}}(s_{\sigma,\T}) - \usf_d \|_{L^2(\Omega)} \| d_{\sigma} U_{\T_{\Omega}} (s_{\sigma,\T}) - D_s \usf(s_{\sigma,\T}) \|_{L^2(\Omega)}.
\]
We thus apply the estimate \eqref{eq:Sbbindep} and the triangle inequality to obtain that
\[
 | \textrm{I} | \lesssim \| d_{\sigma} \left( U_{\T_{\Omega}} (s_{\sigma,\T}) - \usf(s_{\sigma,\T}) \right)\|_{L^2(\Omega)}
 + \| d_{\sigma} \usf(s_{\sigma,\T}) - D_s \usf (s_{\sigma,\T}) \|_{L^2(\Omega)}.
\]
We estimate the first term on the right hand side of the previous expression: the definition \eqref{eq:dsigma} of $d_{\sigma}$ and \cite[Proposition 28]{NOS3} imply that 
\begin{multline*}
 \| d_{\sigma} \left( U_{\T_{\Omega}} (s_{\sigma,\T}) - \usf(s_{\sigma,\T} ) \right)\|_{L^2(\Omega)} \leq \frac1{2\sigma}\Big( \| U_{\T_{\Omega}} (s_{\sigma,\T} + \sigma) - \usf(s_{\sigma,\T} + \sigma) \|_{L^2(\Omega)}
 \\
 + \| U_{\T_{\Omega}} (s_{\sigma,\T} - \sigma) - \usf(s_{\sigma,\T} - \sigma) \|_{L^2(\Omega)} \Big) \lesssim \frac{1}{\sigma}|\log (\# \T_{\Y})|^{2b} (\# \T_{\Y})^{-\frac{1+a}{n+1}} \| \fsf \|_{\mathbb{H}^{1-a}(\Omega)};
\end{multline*}
we notice that $\sigma$ is small enough such that $s_{\sigma,\T} \pm \sigma \in (a,b)$. On the other hand, an estimate similar to \eqref{eq:bouncds} yields that
\begin{equation*}
  \| D_s \usf(s_{\sigma,\T}) - d_{\sigma} \usf(s_{\sigma,\T}) \|_{L^2(\Omega)} \lesssim \sigma^2 a^{-3}. 
\end{equation*}
Collecting the previous estimates we arrive at the following bound for the term $\textrm{I}$:
\begin{equation}
\label{eq:I}
|\textrm{I}| \lesssim \sigma^{-1}| \log (\# \T_{\Y})|^{2b} (\# \T_{\Y})^{-(1+a)/(n+1)} \| \fsf \|_{\mathbb{H}^{1-a}(\Omega)} + \sigma^2 a^{-3}.
\end{equation}
On the basis of \eqref{I+II}, this bound, and the estimate for the term $\textrm{II}$ yield
\[
 | \bar s -s_{\sigma,\T} | \lesssim \sigma^{-1} | \log (\# \T_{\Y})|^{2b} (\# \T_{\Y})^{-(1+a)/(n+1)} \| \fsf \|_{\mathbb{H}^{1-a}(\Omega)} + \sigma^2,
\]
where the hidden constant depends on $a$ and $b$, but is independent of $\sigma$ and $\#\T_\Y$.
This concludes the proof.
\end{proof}

A natural choice of $\sigma$ comes from equilibrating the terms on the right--hand side of \eqref{eq:error_estimate_1}: $\sigma \approx |\log(\#\T_\Y)|^{2b/3} (\# \T_{\Y})^{-(1+a)/3(n+1)}$. This implies the following error estimate.

\begin{corollary}[error estimate: discretization in $s$ and space]
\label{cor:final_est}
Let $\bar s$ be optimal for the identification problem \eqref{eq:minJ}--\eqref{eq:laps} and $s_{\sigma,\T}$ be its approximation defined as the solution to \eqref{def:stau}. If $\#\T_\Y$ is sufficiently large, the parameter $\sigma$ is chosen as
\[
  \sigma \approx |\log(\#\T_\Y)|^{2b/3} (\# \T_{\Y})^{-(1+a)/3(n+1)},
\]
and 
$\fsf \in \mathbb{H}^{1-a}(\Omega)$ then
\begin{equation}
\label{eq:error_estimate_2}
  | \bar s-s_{\sigma,\T} |  \lesssim | \log (\# \T_{\Y})|^{4b/3} (\# \T_{\Y})^{-\frac{2(1+a)}{3(n+1)}},
\end{equation}
where the hidden constant depends on $a$ and $b$ but is independent of $\bar s$, $s_{\sigma,\T}$, and the mesh $\T_{\Y}$.
\end{corollary}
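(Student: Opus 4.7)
The plan is a direct substitution: the preceding theorem already provides the two-term bound
\[
| \bar s -s_{\sigma,\T} | \lesssim \sigma^{-1} | \log (\# \T_{\Y})|^{2b} (\# \T_{\Y})^{-(1+a)/(n+1)} \| \fsf \|_{\mathbb{H}^{1-a}(\Omega)} + \sigma^2,
\]
so all that remains is to calibrate $\sigma$ so that the two contributions have the same asymptotic size in $\#\T_\Y$. Since the first term scales like $\sigma^{-1}$ and the second like $\sigma^2$, the optimal choice of $\sigma$ is obtained from the equation $\sigma^{-1}\bigl(|\log(\#\T_\Y)|^{2b}(\#\T_\Y)^{-(1+a)/(n+1)}\bigr)=\sigma^2$, which yields
\[
\sigma^{3} \approx |\log(\#\T_\Y)|^{2b}(\#\T_\Y)^{-(1+a)/(n+1)},
\]
i.e., $\sigma \approx |\log(\#\T_\Y)|^{2b/3}(\#\T_\Y)^{-(1+a)/3(n+1)}$, which is precisely the prescription in the statement.

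I would then substitute this $\sigma$ back into the theorem's bound. Computing
\[
\sigma^{2} \approx |\log(\#\T_\Y)|^{4b/3}(\#\T_\Y)^{-2(1+a)/3(n+1)},
\]
and, in parallel,
\[
\sigma^{-1}|\log(\#\T_\Y)|^{2b}(\#\T_\Y)^{-(1+a)/(n+1)} \approx |\log(\#\T_\Y)|^{2b-2b/3}(\#\T_\Y)^{-(1+a)/(n+1)+(1+a)/3(n+1)},
\]
which simplifies to $|\log(\#\T_\Y)|^{4b/3}(\#\T_\Y)^{-2(1+a)/3(n+1)}$, matches the second term exactly. Adding the two contributions yields the claimed rate \eqref{eq:error_estimate_2}.

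To make the argument fully rigorous I should also check that the hypotheses of the previous theorem are satisfied, namely that $\sigma$ is sufficiently small and $\#\T_\Y$ is sufficiently large. The former follows automatically because the chosen $\sigma$ tends to zero as $\#\T_\Y \to \infty$ (thanks to Assumption~\ref{ass:alphabeta}, which ensures $a>0$ and $b<1$ and hence bona fide algebraic decay), so for $\#\T_\Y$ large enough both conditions hold simultaneously, and in particular $s_{\sigma,\T}\pm\sigma\in(a,b)$ lies in the neighborhood of $\bar s$ where the quadratic growth \eqref{eq:local_convexity} applies. No step presents a real obstacle here; the entire corollary is a calibration exercise on top of the preceding theorem, and the only bookkeeping to be careful with is tracking that the hidden constants remain independent of $\#\T_\Y$ (they depend only on $a$, $b$, $\Omega$, $\fsf$, and $\usf_d$).
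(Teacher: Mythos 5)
Your proof is correct and is exactly the argument the paper uses: the corollary is obtained by equilibrating the two terms of the preceding theorem's bound, solving $\sigma^3 \approx |\log(\#\T_\Y)|^{2b}(\#\T_\Y)^{-(1+a)/(n+1)}$ for $\sigma$, and substituting back. Your additional check that this $\sigma$ tends to zero (so the hypotheses of the theorem hold for $\#\T_\Y$ large) is a sensible bit of diligence the paper leaves implicit.
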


\section{Numerical examples}\label{s:nex}
In this section, we study the performance of the proposed bisection algorithm of section~\ref{sec:Numerics} when applied to the fully discrete parameter identification problem of  section~\ref{s:fds} with the help of four numerical examples. 

The implementation has been carried out within the MATLAB software library $i$FEM \cite{chen2009ifem}. The stiffness matrices of the discrete system \eqref{eq:V_discrete} are assembled exactly and the forcing terms are computed by a quadrature rule which is exact for polynomials up to degree 4. Additionally, the first term in \eqref{def:jtau} is computed by a quadrature formula which is exact for polynomials of degree 7. All the linear systems are solved exactly using MATLAB's built-in direct solver. 

In all examples, $n=2$, $\Omega = (0,1)^2$, $\textsf{TOL} = 2.2204e$-16, and the initial value of $s_l$, $s_r$ is 0.3, and 0.9, respectively. The truncation parameter for the cylinder $\C_{\Y}$ is $\Y = 1+\frac{1}{3}(\#\T_\Omega)$ which allows balancing the approximation and truncation errors for our state equation, see \cite[Remark~5.5]{NOS}. Moreover, 
\[
 \sigma = \frac{1}{2.5}(\# \T_{\Y})^{-\frac{(1+\epsilon)}{9}},
\]
with $\epsilon = 10^{-10}$.

Under the above setting, the eigenvalues and eigenvectors of $-\Delta$ are:
\[
 \lambda_{k,l} = \pi^2(k^2+l^2), \quad \varphi_{k,l}(x_1,x_2) = \sin(k\pi x_1) \sin(l\pi x_2), \quad 
 k, l \in \mathbb{N} . 
\]
Consequently, by letting $\fsf = \lambda^s_{2,2}\varphi_{2,2}$ for any $s\in(0,1)$ we obtain $\ousf = \varphi_{2,2}$.

In what follows we will consider four examples. In the first one we set $\bar s=1/2$, $\fsf$ and $\ousf$ as above and we set $\usf_d = \ousf$. The second one differs from the first one in that we set $\bar s = (3-\sqrt{5})/2$. In our third example, the exact solution is not known. 
Finally, in our last example we explore the robustness of our algorithm with respect to perturbations in the data. We accomplish this by considering the same setting as in the first example but we add a random perturbation $r \in (-e,e)$ to the right hand side $\fsf$. We then explore the behavior of the optimal parameter $\bar s$ as the size of the perturbation $e$ varies.

\subsection{Example 1}\label{s:ex1}
We recall the definition of the cost function $J(\usf,s)$ from \eqref{def:J} and set $\varphi(s) = \frac{1}{s(1-s)}$. The latter is strictly convex over the interval $(0,1)$ and fulfills the conditions in \eqref{eq:cond_varphi}. The optimal solution $\bar{s}$ to \eqref{eq:minJ}--\eqref{eq:laps} is given by $\bar{s} = 1/2$. 

Table~\ref{t:ex1} illustrates the performance of our optimization solver. The first column indicates the degrees of freedom $\# \T_{\Y}$, the second column shows the value of $s_{\sigma,\T}$ obtained by solving \eqref{def:stau}, and the third column shows the corresponding value $j_{\sigma,\T}$ at $s_{\sigma,\T}$. The final column shows the total number of optimization iterations $N$ taken, for the bisection algorithm to converge. We notice that the observed values of $s_{\sigma,\T}$ matches almost perfectly with $\bar{s}$. In addition, the pattern in $N$, as we refine the mesh, indicates a mesh-independent behavior.

\begin{table}[htb]
\centering
\begin{tabular}{|r|c|c|c|} \hline
$\# \T_{\Y}$ &    $s_{\sigma,\T}$    &    $j_{\sigma,\T}(s_{\sigma,\T})$ & $N$  \\ \hline
 3146 &  4.96572e-01  & -8.89011e-14 &  53   \\
10496 &  4.98371e-01  & -8.38218e-14 &  53   \\
25137 &  4.99069e-01  &  3.49235e-14 &  53   \\
49348 &  4.99402e-01  &  1.52327e-12 &  53   \\ 
85529 &  4.99585e-01  &  6.28221e-12 &  53   \\ \hline 
\end{tabular}
\caption{\label{t:ex1}The first column indicates the degrees of freedom, the second one corresponds to the solution $s_{\sigma,\T}$ of our discrete optimality system \eqref{def:stau} and the third column illustrates the corresponding value of $j_{\sigma,\T}$ at $s_{\sigma,\T}$. The final column shows, $N$, the number of iterations taken by the bisection algorithm to converge. The values of $N$ are moderate. Additionally, we observe that $s_{\sigma,\T}$ matches with the exact solution $\bar{s} = 1/2$ and the pattern in $N$ shows a mesh independent behavior upon mesh refinement.}
\end{table}

Figure~\ref{f:ex1_2} (left panel) shows the computational rate of convergence. We observe that
\[
 |\bar{s}-s_{\sigma,\T}| \lesssim (\#\T_\Y)^{-0.6}
\]
which is significantly better than the predicated rate of $(\#\T_\Y)^{-0.22}$ by the Corollary~\ref{cor:final_est}. Indeed this suggests that our theoretical rates are pessimistic and in practice, our algorithm works much better.

\begin{figure}[htb]
\includegraphics[width=0.48\textwidth]{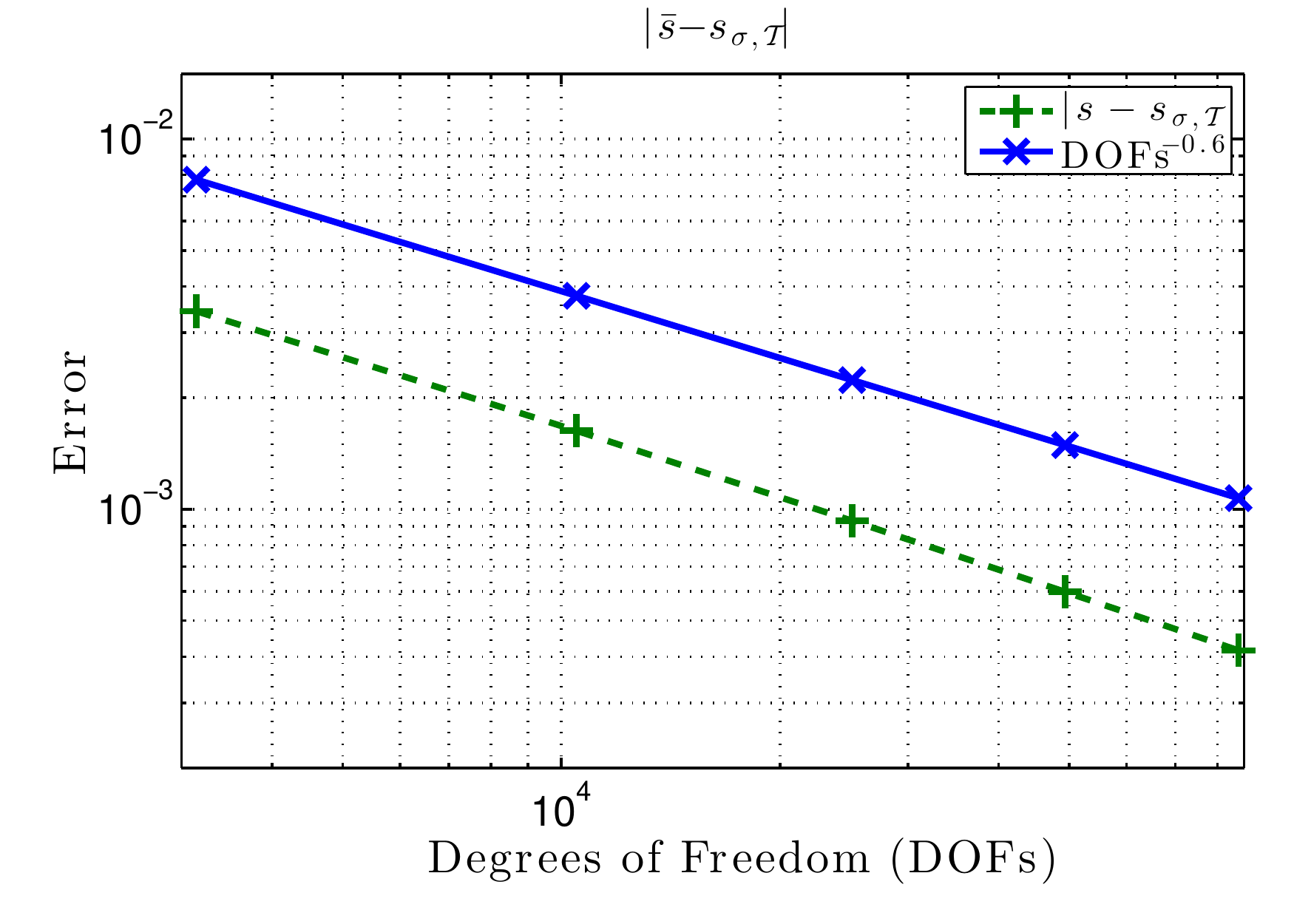} \quad 
\includegraphics[width=0.48\textwidth]{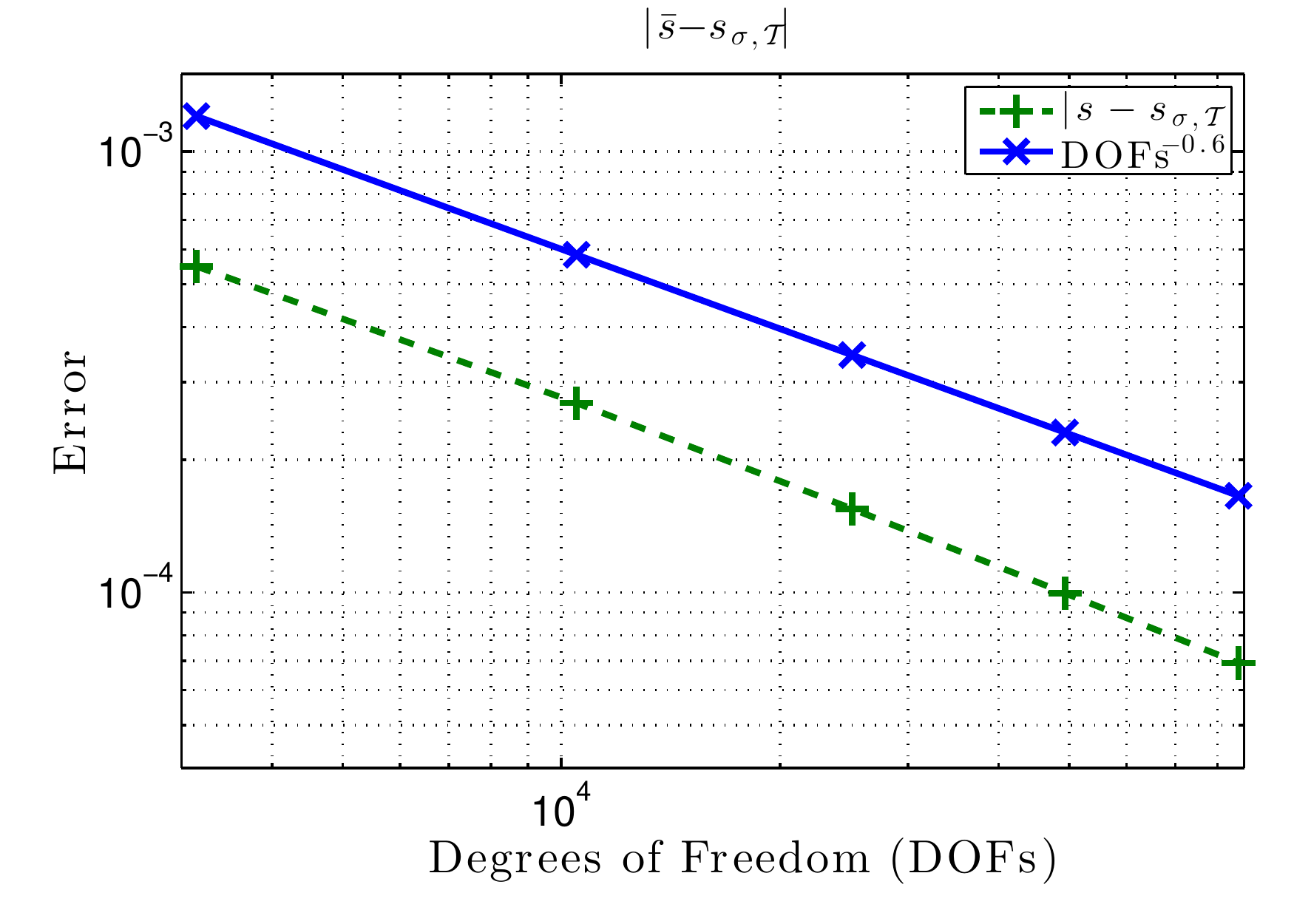}
\caption{\label{f:ex1_2}The left panel (dotted curve) shows the convergence rate for Example~1 and the right one for Example~2. The solid line is the reference line. We notice that the computational rates of convergence, in both examples, are much higher than the theoretically predicted rates in Corollary~\ref{cor:final_est}.}
\end{figure}

\subsection{Example 2}\label{s:ex2}
We set $\varphi(s) = s^{-1} e^{\frac{1}{(1-s)}}$ which is again strictly convex over the interval $(0,1)$ and fulfills the conditions in \eqref{eq:cond_varphi}. The optimal solution $\bar{s}$ to \eqref{eq:minJ}--\eqref{eq:laps} is given by $\bar{s} = (3-\sqrt{5})/2$. 

Table~\ref{t:ex2} illustrates the performance of our optimization solver. As we noted in section~\ref{s:ex1}, the numerically computed solution $s_{\sigma,\T}$ matches almost perfectly with $\bar{s}$ and the pattern of $N$, with mesh refinement, again indicates a mesh independent behavior. 

\begin{table}[htb]
\centering
\begin{tabular}{|r|c|c|c|} \hline
$\# \T_{\Y}$ &    $s_{\sigma,\T}$    &    $j_{\sigma,\T}(s_{\sigma,\T})$ & $N$  \\ \hline
 3146 &  3.81417e-01  &  9.99201e-16 &  46    \\
10496 &  3.81697e-01  & -2.52812e-13 &  53    \\
25137 &  3.81811e-01  &  1.36418e-12 &  53    \\
49348 &  3.81866e-01  &  2.66251e-12 &  53    \\ 
85529 &  3.81897e-01  &  3.53083e-12 &  53 \\ \hline 
\end{tabular}
\caption{\label{t:ex2}The first column indicates the degrees of freedom, the second one corresponds to the solution $s_{\sigma,\T}$ of our discrete optimality system \eqref{def:stau} and the third column illustrates the corresponding value of $j_{\sigma,\T}$ at $s_{\sigma,\T}$. The final column shows, $N$, the number of iterations taken by the bisection algorithm to converge. The values of $N$ are moderate. Additionally, we observe that $s_{\sigma,\T}$ matches with the exact solution $\bar{s} = (3-\sqrt{5})/2$ and the pattern in $N$ shows a mesh independent behavior upon mesh refinement.}
\end{table}

Figure~\ref{f:ex1_2} (right panel) shows the computational rate of convergence. We again see that
\[
 |\bar{s}-s_{\sigma,\T}| \lesssim (\#\T_\Y)^{-0.6}
\]
Thus the observed rate is far superior than the theoretically predicted rate in Corollary~\ref{cor:final_est}.

\subsection{Example 3}

In our third example, we take $\varphi(s) = s^{-1} e^{\frac{1}{(1-s)}}$, $\fsf = 10$, and $\usf_d = \max \big\{0.5-\sqrt{|x_1-0.5|^2+|x_2-0.5|^2}, 0 \big\}$. We notice that $\fsf$ is large, thus the requirements of Theorem~\ref{thm:rate_s} are not necessarily fulfilled. In addition, for $\mu\le 1/2$, $\fsf \not\in \mathbb{H}^{1-\mu}(\Omega)$ thus the requirements of Corollary~\ref{cor:final_est} are not fulfilled. Nevertheless, as we illustrate in Table~\ref{t:ex3}, we can still solve the problem. We again notice a mesh independent behavior in the number of iterations ($N$) taken by the bisection algorithm to converge.

\begin{table}[htb]
\centering
\begin{tabular}{|r|c|c|c|} \hline
$\# \T_{\Y}$  &      $s_{\sigma,\T}$    &    $j_{\sigma,\T}(s_{\sigma,\T})$ & $N$  \\ \hline 
3146  &  4.44005e-01  &  4.22951e-12 &  53 \\
10496 &  4.47239e-01  &  2.97451e-11 &  53 \\
25137 &  4.48182e-01  & -3.20792e-11 &  53 \\
49348 &  4.48544e-01  &  4.83542e-11 &  53 \\
85529 &  4.48690e-01  &  2.68390e-10 &  53 \\ \hline 
\end{tabular}
\caption{\label{t:ex3}The first column indicates the degrees of freedom, the second one corresponds to the solution $s_{\sigma,\T}$ of our discrete optimality system \eqref{def:stau} and the third column illustrates the corresponding value $j_{\sigma,\T}$ at $s_{\sigma,\T}$. The final column shows, $N$, the number of iterations taken by the bisection algorithm to converge. The values of $N$ are moderate and show a mesh independent character.}
\end{table}

\subsection{Example 4}

In our final example we consider a similar setup to subsection~\ref{s:ex1}. We modify the right hand side $\fsf = \lambda_{2,2}^{\bar{s}} \sin(2\pi x_1) \sin(2\pi x_2)$, with $\bar{s}=1/2$, by adding a uniformly distributed random parameter $r \in (-e,e)$. We fix the spatial mesh to $\#\T_\Y = 85,529$. 

At first we set $e = 200$, as a result $r$ is more than 200 times the actual signal $\fsf$, see the first row on Table~\ref{t:ex4}. Despite such a large noise, the recovery of $\bar{s}$ is reasonable. Letting $e \downarrow 0$, we can recover $\bar{s}$ almost perfectly.

\begin{table}[htb]
\centering
\begin{tabular}{|r|c|c|c|} \hline
$e$   &      $s_{\sigma,\T}$    &    $j_{\sigma,\T}(s_{\sigma,\T})$ & $N$  \\ \hline 
200 & 6.33937e-01  & 7.28484e-12 & 53 \\
20 & 5.06469e-01  & -5.17408e-12 & 53 \\
2  & 4.99341e-01  & -7.37949e-12 & 53 \\
0.5 & 4.99581e-01  & -5.68941e-12 & 53 \\
0.25 & 4.99586e-01  & 3.64379e-12 & 53 \\
0.125 & 4.99584e-01  & 3.33318e-13 & 53 \\ \hline 
\end{tabular}
\caption{\label{t:ex4}
Robustness of our algorithm with respect to noisy data. The number of spatial degrees of freedom is fixed to $\# \T_{\Y} = 85,529$. 
The first column indicates the range of the uniformly distributed parameter $r$ which is added to the right hand side $\fsf$, the second one corresponds to the solution $s_{\sigma,\T}$ of our discrete optimality system \eqref{def:stau} and the third column illustrates the corresponding value $j_{\sigma,\T}$ at $s_{\sigma,\T}$. The final column shows $N$, the number of iterations taken by the bisection algorithm to converge. Notice that even with a noise which is 200 times more than the actual signal $\fsf$ the recovery of $\bar{s}$ is reasonable (first row). If the noise is of the same order as $\fsf$ we can recover $\bar{s}$ perfectly. The values of $N$ are moderate and show a mesh independent character.}
\end{table}

\bibliographystyle{plain}
\bibliography{biblio}
\end{document}